\renewcommand*{\backref}[1]{}
\renewcommand*{\backrefalt}[4]{%
    \ifcase #1 (Not cited.)%
    \or        (p.\,#2)%
    \else      (pp.\,#2)%
    \fi} 
\newtheorem{theorem}{Theorem}
\newtheorem{lemma}[theorem]{Lemma}
\newtheorem{corollary}[theorem]{Corollary}
\theoremstyle{definition}
\newtheorem{remark}[theorem]{Remark}
\newtheorem{problem}[theorem]{Problem}
\numberwithin{equation}{section}
\numberwithin{theorem}{section}
\numberwithin{table}{section}
\numberwithin{figure}{section}
\definecolor{olive}{rgb}{0.3, 0.4, .1}
\definecolor{dgreen}{rgb}{0.,0.5,0.}
\newcommand{\yann}[1]{{\color{red}#1}}
\def\cC{{\mathcal C}}
\def\cH{{\mathcal H}}
\def\C{\mathbb{C}}
\def\G{\mathbb{G}}
\def\Z{\mathbb{Z}}
\def\Q{\mathbb{Q}}
\def\fp{\mathfrak{p}}
\def\fq{\mathfrak{q}}
 \def\mand{\qquad\mbox{and}\qquad}
\def\vec#1{\mathbf{#1}}
\def\ff{\mathbf{f}}
\def\ov#1{{\overline{#1}}}
\def\fp{\mathfrak p}
\newcommand{\rA}{\ensuremath{\mathscr{A}}}
\def\({\left(}
\def\){\right)}
\def\house#1{{%
    \setbox0=\hbox{$#1$}
    \vrule height \dimexpr\ht0+1.4pt width .5pt depth \dp0\relax
    \vrule height \dimexpr\ht0+1.4pt width \dimexpr\wd0+2pt depth \dimexpr-\ht0-1pt\relax
    \llap{$#1$\kern1pt}
    \vrule height \dimexpr\ht0+1.4pt width .5pt depth \dp0\relax}}
\newenvironment{notation}[0]{%
  \begin{list}%
    {}%
    {\setlength{\itemindent}{0pt}
     \setlength{\labelwidth}{1\parindent}
     \setlength{\labelsep}{\parindent}
     \setlength{\leftmargin}{2\parindent}
     \setlength{\itemsep}{0pt}
     }%
   }%
  {\end{list}}
\newcommand{\g}{\gamma}
\newcommand{\Res}{{\operatorname{Res}}}
\renewcommand{\setminus}{\smallsetminus}
\newcommand{\h}{\mathrm{h}}
\newcommand{\di}{\mathrm{div}}
\newcommand\Gm{\G_{\mathrm{m}}}
\begin{document}

\title[Multiplicative dependence of rational values]
{Multiplicative dependence of rational values modulo approximate finitely generated groups}

\author[A. B\' erczes] {Attila B\' erczes}
\address{Institute of Mathematics, University of Debrecen, H-4010 Debrecen, P.O. BOX 12, Hungary}
\email{berczesa@science.unideb.hu}

\author[Y. Bugeaud] {Yann Bugeaud}
\address{Institut de Recherche Math\' ematique Avanc\' ee, U.M.R. 7501, Universit\' e de Strasbourg et C.N.R.S., 7, rue Ren\' e Descartes, 67084 Strasbourg, France} 

\yann{\address{Institut universitaire de France}}
\email{yann.bugeaud@math.unistra.fr}

\author[K. Gy\H ory]{K\'alm\'an Gy\H ory}
\address{Institute of Mathematics, University of Debrecen, H-4010 Debrecen, P.O. Box 12, Hungary}
\email{gyory@science.unideb.hu}

\author[J. Mello] {Jorge Mello}
\address{Department of Mathematics and Statistics, Oakland University, 48307 Michigan, United States.}
\email{jorgedemellojr@oakland.edu}

\author[A. Ostafe] {Alina Ostafe}
\address{School of Mathematics and Statistics, University of New South Wales, Sydney, NSW 2052, Australia}
\email{alina.ostafe@unsw.edu.au}

\author[M. Sha]{Min Sha}
\address{School of Mathematical Sciences, South China Normal University, Guangzhou 510631, China}
\email{min.sha@m.scnu.edu.cn}

\subjclass[2010]{11N25, 11R04}

\keywords{Multiplicative dependence, multiplicative dependence modulo a set}

\begin{abstract}
In this paper, we establish some finiteness results about the multiplicative dependence of rational values
modulo sets which are `close' (with respect to the Weil height) to division groups of finitely generated multiplicative groups of a number field $K$.
For example, we show that under some conditions on rational functions $f_1, \ldots, f_n\in K(X)$,  
there are only finitely many elements $\alpha \in K$ such that $f_1(\alpha),\ldots,f_n(\alpha)$ are multiplicatively dependent modulo such sets. 
\end{abstract}

\maketitle

%%%%%%%%%%%%%%%%%%%%%%%%%%%%%%%%%%%%%%%%%%%%%%%%%%%%%%%%%%%%%%%%%%%%%%

\section{Introduction}

\subsection{Motivation}

Given  non-zero complex numbers $\alpha_1, \ldots, \alpha_n \in \C^*$, we say that they are \textit{multiplicatively dependent} if there exist integers $k_1,\ldots,k_n \in \Z$, not all zero, such that
\begin{equation*}
\alpha_1^{k_1}\cdots \alpha_n^{k_n} = 1;
\end{equation*}
and we say that they are \textit{multiplicatively dependent modulo $G$}, where $G$ is a subset of $\C^*$,
if there exist integers $k_1,\ldots,k_n \in \Z$, not all zero, such that
\begin{equation*}
\alpha_1^{k_1}\cdots \alpha_n^{k_n} \in G.
\end{equation*}

Multiplicative dependence of algebraic numbers has been studied for a long time and still very actively;
see, for instance, \cite{BCMOS, BaS, BMZ, DS2016,KSSS, LvdP, OSSZ1, OSSZ2, PSSS, vdPL,  Stew} and the references therein.
The authors in \cite{OSSZ2} have studied the multiplicative
dependenc of elements in an orbit of an algebraic dynamical system,
and recently in \cite{BOSS} this has been extended to the more general setting of multiplicative dependence modulo a finitely generated multiplicative group.

In this paper, we want to study multiplicative dependence among values of rational functions modulo  sets which can be
roughly described as  {\it approximate division groups\/} of finitely generated groups $\Gamma$, denoted by $\Gamma^{\di}_\varepsilon$ (which is defined in the next section), that is, sets which are not far with respect to the Weil height from the division group of a finitely generated multiplicative group of a number field.

The motivation also partly comes from the study of points on subvarieties of tori.
Let  $\Gm$ be  the multiplicative algebraic group over the complex numbers $\C$, that is $\Gm = \C^*$ endowed with the multiplicative group law.
Intersection of varieties in $\Gm^n$ with sets of the type $\Gamma^{\di}_\varepsilon$ falls within two conjectures, the {\it Mordell--Lang conjecture\/} on intersection of varieties with finitely generated
subgroups and the {\it Bogomolov conjecture\/} which is about the discreteness of the set of
points of bounded height in a variety.
This direction has been extensively studied over several decades,
see~\cite{AlSm,AmVia,BEGP,BS,BoZan,Ev,Lau,Lia,Martinez,Maz,Pon,Poo,Rem,Sch1}
and references therein, which in particular give precise
quantitative results about the intersection of varieties with $\Gamma^{\di}_\varepsilon$.

Here, in some sense, instead of assuming each coordinate of a point is from $\Gamma^{\di}_\varepsilon$,
we impose that the coordinates of a point multiplicatively generate an element in $\Gamma^{\di}_\varepsilon$.

\subsection{Notations}
\label{subsection:notation}
Throughout the paper, we use the following notations:
\begin{notation}
 \item[\textbullet]  $K$ is a number field.
 \item[\textbullet]  $\ov{K}$ is an algebraic closure of $K$.
 \item[\textbullet] $S$ is a finite set of places of $K$ containing all the infinite places.
 \item[\textbullet] $O_S$ is the ring of $S$-integers of $K$.
 \item[\textbullet] $R^*$ is the unit group of a ring $R$.
 \item[\textbullet]  $\Gamma$ is  a finitely generated subgroup of $K^*$.
 \item[\textbullet]  For $A \subseteq K^*$,
 $$A^{\di}:=\{\alpha \in \ov{K} : \alpha^m \in A \text{ for some integer } m \ge 1\}.$$
  \item[\textbullet] For $\varepsilon > 0$, $$\Gamma^{\di}_\varepsilon := \{\alpha \beta : \alpha \in \Gamma^{\di}, \beta \in \ov{K}^* \text{ with } \h(\beta) \leq \varepsilon \}.$$
 \end{notation}

Here $\h(\cdot)$ stands for the absolute logarithmic Weil height function.
The set $\Gamma^\di$ is called the division group of $\Gamma$.

In addition, let $M_K$ be the set of places of $K$, $M_K^\infty$ the set of infinite places of $K$,
and $M_K^0 = M_K \setminus M_K^\infty$.

\subsection{Main results}
In this section we state the main results proved in this paper. Informally, our results can be summarised as follows: given $f_1, f_2, \ldots, f_n \in K[X]$ satisfying some natural conditions (some results hold for rational functions as well), we prove finiteness of the sets of
\begin{itemize}
\item $\alpha\in K$ such that $f_1(\alpha), \ldots, f_n(\alpha)$ are multiplicatively dependent modulo $\Gamma^{\di}_\varepsilon$;
\item $\alpha\in\Gamma^{\di}_\varepsilon$ such that $f_1(\alpha), \ldots, f_n(\alpha)$ are multiplicatively dependent modulo $\Gamma^{\di}_\varepsilon$.
\end{itemize}
We would like to unify these results and have a finiteness result for the set of $\alpha\in K(\Gamma^\di)$ satisfying the conclusion above, and thus we conclude this section with an open problem in this direction.  

We now formally state our results.

\begin{theorem}
\label{thm:multdep-poly2}
Let $f_1, f_2, \ldots, f_n \in K[X]$ be pairwise coprime polynomials.
Assume that each of them has at least two distinct roots.
Then, for every $\varepsilon > 0$ there are only finitely many elements $\alpha \in K$ such that $f_1(\alpha), \ldots, f_n(\alpha)$ are multiplicatively dependent modulo $\Gamma^{\di}_\varepsilon$.
\end{theorem}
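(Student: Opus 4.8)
The plan is to argue by contradiction. Suppose there are infinitely many $\alpha\in K$ for which $f_1(\alpha),\dots,f_n(\alpha)$ are multiplicatively dependent modulo $\Gamma^{\di}_\varepsilon$. Discarding the finitely many $\alpha$ with $\prod_i f_i(\alpha)=0$, I attach to each remaining $\alpha$ a nonzero vector $\bfk=(k_1,\dots,k_n)\in\Z^n$, an element $\beta\in\Gamma^{\di}$ and some $\gamma\in\ov K^*$ with $\h(\gamma)\le\varepsilon$ and $\prod_i f_i(\alpha)^{k_i}=\beta\gamma$. First I would normalise: dividing $\bfk$ by $\gcd_i(k_i)$ and replacing $\beta\gamma$ by the corresponding root (a root of an element of $\Gamma^{\di}$ stays in $\Gamma^{\di}$, dividing the exponents only divides the height of the $\gamma$-part, and any root-of-unity ambiguity is absorbed harmlessly) lets me assume $\gcd\{k_i:k_i\ne 0\}=1$; by Northcott's theorem $\h(\alpha)\to\infty$ along a subsequence, and by pigeonhole I may assume the support $I=\{i:k_i\ne0\}$ is a fixed nonempty set.

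Next I would fix a number field $L\supseteq K$ containing all the roots of all the $f_i$ and a generating set of $\Gamma$, together with a finite set $S$ of places of $L$ containing the archimedean places, the support of those generators, and the primes dividing the leading coefficients, the discriminants $\Disc(f_i)$ and the resultants $\Res(f_i,f_j)$; after enlarging $S$ I may also assume its $S$-class group is trivial. Then for $v\notin S$ one has $\ord_v\beta=0$, and $\ord_v f_i(\alpha)=\sum_j e_{ij}\ord_v(\alpha-a_{ij})$ with at most one nonzero term over all $i\in I$ and all roots $a_{ij}$ when $\ord_v\alpha\ge 0$, while $\ord_v f_i(\alpha)=(\deg f_i)\ord_v\alpha$ when $\ord_v\alpha<0$. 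Substituting this into $\sum_{i\in I}k_i\ord_v f_i(\alpha)=\ord_v\gamma$ and summing over $v\notin S$ against the local degrees, the bound $\h(\gamma)\le\varepsilon$ yields two estimates: for each root $a_{ij}$ with $i\in I$, the ``numerator of $\alpha-a_{ij}$ away from $S$'' has logarithmic size $\le 2[L:\Q]\varepsilon/|k_i|$; and, writing $d=\sum_{i\in I}k_i\deg f_i$, if $d\ne0$ then the denominator of $\alpha$ away from $S$ has logarithmic size $\le 2[L:\Q]\varepsilon/|d|$.

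The main work is then a case split on $M=\max_i|k_i|$. If $M$ stays bounded, I pass to a subsequence with $\bfk$ fixed, so that $g:=\prod_{i\in I}f_i^{k_i}$ is a fixed rational function having at least two distinct zeros or poles (here $\gcd(k_i)=1$, the pairwise coprimality, and the hypothesis that each $f_i$ has at least two distinct roots all enter), with $g(\alpha)\in\Gamma^{\di}_\varepsilon$ for infinitely many $\alpha$. Choosing distinct roots $a_1,a_2$ of some $f_{i_0}$, $i_0\in I$, I set $u=(\alpha-a_1)/(\alpha-a_2)$; the two estimates (and the cancellation of the denominators of $\alpha$ in $u$) show that the divisor of $u$ away from $S$ has bounded logarithmic size, so, using the triviality of the $S$-class group, $u=w\rho$ with $w\in O_S^*$ and $\rho\in L$ of height bounded solely in terms of $\varepsilon,L,S$, whence $\rho$ ranges over a finite set by Northcott. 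When $d\ne0$ the same applies to $1-u=(a_2-a_1)/(\alpha-a_2)$, so the identity $u+(1-u)=1$ becomes, for each of finitely many pairs of remainders, an $S$-unit equation with nonzero coefficients, and hence has finitely many solutions; this forces $u$, and therefore $\alpha$, into a finite set, contradicting infinitude. When $d=0$ (which forces $|I|\ge2$) the denominator of $\alpha$ is not controlled, so I instead build, besides $u$, a second function $w'=(\alpha-b_1)/(\alpha-b_2)$ from two roots of another $f_{i_1}\in I$; eliminating $\alpha$ between the two M\"obius relations gives a single conic relation between $u$ and $w'$ that is not a translated subtorus of $\Gm^2$ (the coprimality of $f_{i_0},f_{i_1}$ rules out the degenerate possibilities), and the resulting three-term $S$-unit equation again confines $\alpha$ to a finite set. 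If instead $M\to\infty$, then $\gcd(k_i)=1$ forces $|I|\ge2$; if only one exponent is large then $|d|\to\infty$, so the second estimate gives $\alpha\in O_S$ and the first gives that the corresponding $f_{i_0}(\alpha)$ is an $S$-unit, whence only finitely many $\alpha$ by Siegel's theorem / the $S$-unit equation applied to $f_{i_0}$ (which has two distinct roots); and if two exponents are large the $d=0$ argument applies again. Every branch contradicts the assumed infinitude.

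I expect the main obstacle to be the interplay between the division group and the inflation by $\varepsilon$: since $\beta\in\Gamma^{\di}$ may be an arbitrarily deep root of a $\Gamma$-element, $\Gamma^{\di}_\varepsilon$ cannot be replaced outright by a finitely generated group, and since $\varepsilon$ is an arbitrary fixed positive constant one is forced to work not with honest $S$-units but with elements whose divisor away from $S$ has bounded — not vanishing — size; pushing this through, by reducing to the finite set of bounded-height remainders $\rho\in L$ so that genuine $S$-unit equations become applicable, is the crux. An alternative, which is also what is needed for uniformity in the depth of the roots and for the companion statements (for $\alpha\in\Gamma^{\di}_\varepsilon$ and for rational functions), is to extract a hyper-elliptic equation $f_{i_0}(\alpha)=\delta\,y^m$ with $f_{i_0}$ of at least two distinct roots and to bound $m$ via the new version of the Schinzel--Tijdeman theorem, which is the main external tool of the paper.
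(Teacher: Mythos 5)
Your argument is correct in outline and reaches the theorem by a genuinely different route from the paper's. The paper first applies Lemma~\ref{lem:K H} to convert membership in $\Gamma^{\di}_\varepsilon$ into an equation $f_1(\alpha)^{k_1}\cdots f_n(\alpha)^{k_n}=\gamma$ with $\gamma$ an $S$-unit for a fixed enlarged $S$ (absorbing the finitely many bounded-height perturbations and the resultants $\Res(f_i,f_j)$ into $S$); it then inducts on $n$ and splits on whether $\alpha\in O_S$: in the integral case a resultant/valuation argument shows some $f_i(\alpha)$ must be an $S$-unit, and in the non-integral case the forced relation $\sum_i k_i\deg f_i=0$ is parametrised by a lattice basis to produce a \emph{fixed} product $F=\prod_j f_j^{t_{1,j}}$ with $F(\alpha)\in O_S^*$; both branches terminate with Lemma~\ref{lem:KGa}, whose finiteness rests on Faltings. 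You instead keep $\beta\in\Gamma^{\di}$, use that $\ord_v\beta=0$ off the support of $\Gamma$ together with $\h(\gamma)\le\varepsilon$ to get quantitative bounds of order $\varepsilon/|k_i|$ and $\varepsilon/|d|$ on the numerators of $\alpha-a_{ij}$ and the denominator of $\alpha$, then split on the size of the exponents and finish with two- and three-term $S$-unit equations in the M\"obius invariants $(\alpha-a_1)/(\alpha-a_2)$. This avoids Faltings entirely but requires the Subspace Theorem for the three-term unit equation in the $d=0$ and two-large-exponents branches, so it is no more effective; in exchange it makes the dependence on $\varepsilon$ and on the exponent vector explicit and dispenses with the induction on $n$. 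The two places you leave sketchiest do work out but would need to be written in full: the decomposition $u=w\rho$ with $w\in O_S^*$ and $\rho$ in a finite set (finitely many fractional ideals of bounded norm away from $S$, each principal after trivialising the $S$-class group, with generators of bounded height), and the vanishing-subsum analysis of the relation $(a_2-b_2)uw'+(b_1-a_2)u+(b_2-a_1)w'+(a_1-b_1)=0$, whose coefficients are nonzero precisely because the roots of the coprime $f_{i_0}$ and $f_{i_1}$ are pairwise distinct and whose degenerate subsums each pin $u$, $w'$, or $u w'$ together with $u/w'$ to finitely many values, hence pin $\alpha$.
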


We first remark that in Theorem~\ref{thm:multdep-poly2}, since $\alpha \in K$ and 
the polynomials $f_1, \ldots, f_n$ are in $K[X]$, by Lemma~\ref{lem:K H} below we know that 
``modulo $\Gamma^{\di}_\varepsilon$" can be reduced to modulo a subset of $K^*$ 
which is somehow ``close" to $\Gamma$.

We also remark that in Theorem~\ref{thm:multdep-poly2} the two conditions of  the polynomials being ``pairwise coprime" and ``each of them has at least two distinct roots"  somehow cannot be removed. For example, choosing $f_1 = X(X+2)$, $f_2 = (X+1)(X+2)$, and $f_3,\ldots,f_n$ arbitrary, 
for any $\alpha = 1/(\beta -1)$ with $\beta \in \Gamma$ and $\beta \ne 1, 1/2$ and $f_3(\alpha) \cdots f_n(\alpha) \ne 0$, 
we have 
$$f_1(\alpha)^{-1} f_2(\alpha) \(f_3(\alpha)\cdots f_n(\alpha)\)^0= \beta \in \Gamma.$$ 
In addition, choosing pairwise coprime polynomials $f_1, f_2, \ldots, f_n \in K[X]$ with  $f_1=X - a$ for some $a \in K$, for any $\alpha \in \Gamma$ satisfying $f_2(\alpha+a) \cdots f_n(\alpha+a) \ne 0$,
we have $$f_1(\alpha + a)\big(f_2(\alpha + a)\cdots f_n(\alpha + a)\big)^0 = \alpha \in \Gamma.$$

%Using Theorem~\ref{thm:multdep-poly2}, 
% and a bounded height result of Bombieri, Masser and Zannier \cite[Theorem 1']{BMZ} (see Lemma~\ref{lem:BMZ} below), 
Using Theorem~\ref{thm:multdep-poly2}, we establish the following result, which holds for rational functions.
For this, we say that non-zero rational functions $f_1,\ldots, f_n \in K(X)$ are   \textit{multiplicatively independent modulo constants} 
if they are multiplicatively independent modulo $K^*$, that is, there is no non-zero integer vector $(k_1,\ldots,k_n)$ such that 
$$
f_1^{k_1} \cdots f_n^{k_n} \in K^*.
$$
%In addition, for any rational function $f \in K(X)$, if there is no linear factor in its numerator and denominator, we say that \textit{$f$ has no linear factor}. 
In addition, for any rational function $f \in K(X)$, the numerator and denominator of $f$ are meant to be two polynomials $g, h \in K[X]$, 
respectively, such that $f= g / h$ and $\gcd(g,h)=1$.  

\begin{theorem}  
\label{thm:multdep-rat}
Let $f_1, f_2, \ldots, f_n \in K(X)$ be non-constant rational functions such that they are multiplicatively independent modulo constants. 
Assume that for each $f_i, i =1, 2, \ldots, n$, its numerator either has no linear factor or has at least two distinct linear factors over $K$, and so does its denominator. 
Assume further that $f_1, f_2, \ldots, f_n$ have distinct linear factors over $K$ $($if they have$)$. 
Then, for every $\varepsilon > 0$ there are only finitely many elements $\alpha \in K$ such that $f_1(\alpha), \ldots, f_n(\alpha)$ are multiplicatively dependent modulo $\Gamma^{\di}_\varepsilon$.
\end{theorem}

\begin{remark}
If $f_1, f_2, \ldots, f_n \in K(X)$ in Theorem~\ref{thm:multdep-rat} are all monic (that is, both numerator and denominator are monic), then 
the assumption ``they are multiplicatively independent modulo constants" can be replaced by ``they are multiplicatively independent".  
% then from the proof of Theorem~\ref{thm:multdep-rat} we note that we only need the condition that $f_1, f_2, \ldots, f_n$ are multiplicatively independent.
\end{remark}

The following corollary is about multiplicative dependence in orbits of a rational function,
which somehow can be viewed as an extension of \cite[Theorem 1.7]{BOSS}.
For a rational function $f \in K(X)$ and a positive integer $n \ge 1$, let $f^{(n)}$ be the $n$-th compositional iterate of $f$. 
In addition, for any rational function $f \in K(X)$, if both its numerator and denominator have no linear factor over $K$, we say that \textit{$f$ has no linear factor}. 

\begin{corollary}
\label{cor:multdep-ite}
Let $f \in K(X)$ be a non-constant rational function 
satisfying one of the following two conditions: 
\begin{itemize}
\item $f \in K[X]$, $f$ has at least two distinct roots, and $0$ is not a periodic point of $f$; 
\item $f$ has no linear factor.
\end{itemize}
Then, for any $\varepsilon>0$ and any integer $n \ge 1$, there are only finitely many elements $\alpha \in K$
such that $f^{(m+1)}(\alpha)$, $\ldots, f^{(m+n)}(\alpha)$ are multiplicatively dependent modulo $\Gamma^{\di}_\varepsilon$ for some integer $m \ge 0$.
\end{corollary}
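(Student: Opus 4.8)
The plan is to deduce the corollary from Theorem~\ref{thm:multdep-poly2} by showing that, after fixing $n$, only finitely many iterates $f^{(m+1)}, \ldots, f^{(m+n)}$ can possibly contribute, and that for each admissible $m$ the $n$-tuple of polynomials $(f^{(m+1)}, \ldots, f^{(m+n)})$ satisfies the hypotheses of that theorem. First I would record the two structural facts we need about iterates of a polynomial $f$ with at least two distinct roots: (i) every iterate $f^{(k)}$ with $k \ge 1$ again has at least two distinct roots — indeed if $\beta_1 \ne \beta_2$ are roots of $f$, then choosing preimages $\gamma_i$ under $f^{(k-1)}$ with $f^{(k-1)}(\gamma_i) = \beta_i$ (which exist in $\ov K$ since $f^{(k-1)}$ is nonconstant and surjective on $\ov K$) gives two distinct roots $\gamma_1 \ne \gamma_2$ of $f^{(k)}$; and (ii) since $0$ is not a periodic point of $f$, the values $f^{(k)}(0)$ for $k \ge 1$ are all nonzero, so $0$ is never a root of any iterate beyond the first, and more importantly the iterates are ``eventually coprime'' in the sense made precise below.

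The key step is to control the common roots of $f^{(i)}$ and $f^{(j)}$ for $i < j$. If $\alpha$ is a common root, then $f^{(i)}(\alpha) = 0$ and $f^{(j)}(\alpha) = f^{(j-i)}(f^{(i)}(\alpha)) = f^{(j-i)}(0) = 0$, so $0$ is a root of $f^{(j-i)}$, i.e. $0$ is a preperiodic point of $f$ that is \emph{not} periodic; write $\ell$ for the smallest positive integer with $f^{(\ell)}(0) = 0$ if it exists (the ``tail length'' of $0$), and note $f^{(j-i)}(0)=0$ forces $j - i \ge \ell$. Hence if we restrict attention to windows of length $n$ starting at index $m+1$ with $m$ large — specifically $m + 1 > $ (the tail length of $0$) — then no two of $f^{(m+1)}, \ldots, f^{(m+n)}$ share a root, so they are pairwise coprime (they are polynomials with no common root, hence coprime in $K[X]$ up to the irrelevant constant factors, which do not affect multiplicative dependence modulo $\Gamma^{\di}_\varepsilon$ after absorbing the constant, of nonzero height bounded, into the $\beta$-part or noting constants lie in $K^* \subseteq$ a suitably enlarged $\Gamma$). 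Combined with fact (i), Theorem~\ref{thm:multdep-poly2} applies to this single $n$-tuple and yields a finite set $E_m \subset K$ of bad $\alpha$; taking the union over the finitely many remaining small values of $m$ handled directly (for each such $m$, discard from $(f^{(m+1)}, \ldots, f^{(m+n)})$ any polynomial with fewer than two distinct roots — there are boundedly many such indices since $0$ eventually leaves the tail — and any coincident pairs, then apply Theorem~\ref{thm:multdep-poly2} to the surviving subfamily, noting that multiplicative dependence of the full tuple implies a nontrivial relation among a sub-tuple unless all the removed factors carry the relation, a degenerate case one checks separately) gives a finite union.

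The main obstacle I anticipate is the bookkeeping for the finitely many ``small $m$'' windows: for those one cannot blindly invoke Theorem~\ref{thm:multdep-poly2} since some iterates in the window might share roots or an iterate might (a priori) have only one distinct root. Fact (i) kills the latter worry outright, so the real point is to argue that whenever $f^{(i)}$ and $f^{(j)}$ in the window share a root, the exponent constraints still force $\alpha$ into a finite set — here one uses that the relation $\prod f^{(m+k)}(\alpha)^{e_k} \in \Gamma^{\di}_\varepsilon$ can be rewritten, after grouping equal/coprime factors, as a relation among a pairwise coprime subfamily each member of which still has two distinct roots, plus possibly a pure power $f^{(i)}(\alpha)^{e}$ of one repeated factor; if that subfamily is nonempty Theorem~\ref{thm:multdep-poly2} finishes it, and if it is empty one is left with $f^{(i)}(\alpha)^e \in \Gamma^{\di}_\varepsilon$ for a single $i$ and $e \ne 0$, which forces $f^{(i)}(\alpha) \in \Gamma^{\di}_{\varepsilon/|e|} \subseteq \Gamma^{\di}_{\varepsilon'}$, and finiteness of such $\alpha$ (for a fixed nonconstant $f^{(i)}$ with two distinct roots) is itself a consequence of Theorem~\ref{thm:multdep-poly2} applied with $n=1$ to that polynomial together with a fixed auxiliary coprime polynomial with two distinct roots, or directly from the heights/Bogomolov-type input underlying it. Once this degenerate analysis is in place, the corollary follows by taking the finite union of all the exceptional sets produced.
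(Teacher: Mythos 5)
There is a genuine gap, and it is the central difficulty of the corollary rather than a side issue. The set you must show to be finite is
\[
\bigcup_{m\ge 0}\bigl\{\alpha\in K:\ f^{(m+1)}(\alpha),\ldots,f^{(m+n)}(\alpha)\ \text{mult.\ dep.\ mod }\Gamma^{\di}_\varepsilon\bigr\},
\]
an \emph{infinite} union over $m$. Your plan applies Theorem~\ref{thm:multdep-poly2} once for each $m$, producing a finite set $E_m$, and then speaks of ``taking the union over the finitely many remaining small values of $m$'' --- but there are infinitely many values of $m$, and Theorem~\ref{thm:multdep-poly2} gives no uniformity in $m$ (indeed it is not effective, by Lemma~\ref{lem:KGa}, and even an effective bound would grow with the degrees and heights of the iterates). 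An infinite union of finite sets need not be finite, so the argument does not close. The paper's proof avoids this with one key identity you are missing: $f^{(m+i)}(\alpha)=f^{(i)}\bigl(f^{(m)}(\alpha)\bigr)$. Setting $\beta=f^{(m)}(\alpha)$, multiplicative dependence of $f^{(m+1)}(\alpha),\ldots,f^{(m+n)}(\alpha)$ is exactly multiplicative dependence of $f^{(1)}(\beta),\ldots,f^{(n)}(\beta)$, so Theorem~\ref{thm:multdep-poly2} is applied \emph{once}, to the fixed tuple $(f^{(1)},\ldots,f^{(n)})$, yielding finitely many admissible $\beta$; one then shows that for each such $\beta$ the set $\{\alpha\in K: f^{(m)}(\alpha)=\beta \text{ for some } m\ge 0\}$ is finite, using \cite[Lemma 2.3]{BOSS} together with the finiteness of $K$-rational preperiodic points of $f$. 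Without this reduction (or some substitute giving uniformity in $m$), your proof does not establish the corollary.

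A secondary, less serious point: your ``tail length'' discussion is based on a misreading of the hypothesis. If $f^{(\ell)}(0)=0$ for some $\ell\ge 1$, then $0$ \emph{is} a periodic point, which is excluded; a preperiodic-but-not-periodic $0$ never satisfies $f^{(\ell)}(0)=0$. Hence under the stated hypothesis no iterate has $0$ as a value of another iterate's root, so $f^{(i)}$ and $f^{(j)}$ are coprime for \emph{all} $i\ne j$, and the entire ``small $m$''/degenerate-window analysis is vacuous. Your fact (i) (each iterate retains at least two distinct roots) is correct and is indeed needed to invoke Theorem~\ref{thm:multdep-poly2}.
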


When $n=2$ in Theorem~\ref{thm:multdep-poly2}, we can relax the condition of coprimality on the polynomials $f_1$ and $f_2$.

We say that $f_1,\ldots,f_n\in\C(X)$ \textit{multiplicatively generate} a rational function $g$
if there exist integers $k_1,\ldots,k_n \in \Z$, not all zero, such that
$$
f_1^{k_1} \cdots f_n^{k_n} = g.
$$

We have the following result:

\begin{theorem}
\label{thm:multdep-poly1}
Let $f_1,f_2 \in K[X]$ be polynomials of degree at least $2$, each having at least two distinct roots.
Assume that they cannot multiplicatively generate a power of a linear fractional function. 
Then, for any $\varepsilon>0$  there are only finitely many elements $\alpha \in K$ such that $f_1(\alpha)$ and $f_2(\alpha)$ are multiplicatively dependent modulo $\Gamma^{\di}_\varepsilon$.
\end{theorem}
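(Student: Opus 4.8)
The plan is to reduce the statement --- by a case analysis on $\gcd(f_1,f_2)$ and on the exponents --- to Theorem~\ref{thm:multdep-poly2} together with the new version of the Schinzel--Tijdeman theorem for polynomials with at least two distinct roots announced in the abstract (equivalently, to quantitative results on $S$-unit equations with a small error term). Suppose, for contradiction, that infinitely many $\alpha\in K$ admit integers $k_1,k_2$, not both $0$, an element $\gamma\in\Gamma^{\di}$ and some $\beta\in\overline K^{*}$ with $\h(\beta)\le\varepsilon$ such that
\[
f_1(\alpha)^{k_1}f_2(\alpha)^{k_2}=\gamma\beta .
\]
Dividing $k_1,k_2$ by their greatest common divisor replaces $\gamma\beta$ by one of its roots, which is again of the form $\gamma'\beta'$ with $\gamma'\in\Gamma^{\di}$ and $\h(\beta')\le\varepsilon$, so we may assume $\gcd(k_1,k_2)=1$. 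If $d:=\gcd(f_1,f_2)$ is constant, then $f_1$ and $f_2$ are coprime and, each having at least two distinct roots, Theorem~\ref{thm:multdep-poly2} (with $n=2$) applies directly. Hence assume from now on that $d$ is non-constant, and write $f_1=dg_1$, $f_2=dg_2$ with $g_1,g_2$ coprime (so at least one of $g_1,g_2$ is non-constant, for otherwise $f_1/f_2$ would be a constant and the hypothesis violated).

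Next I would clear away the easy exponent patterns. If $k_1=0$ or $k_2=0$, then after extracting a root one of $f_1(\alpha),f_2(\alpha)$ lies in $\Gamma^{\di}_\varepsilon$, and since that $f_i$ has at least two distinct roots the one-variable case --- which is precisely the new Schinzel--Tijdeman theorem --- bounds $\h(\alpha)$. If $k_1+k_2=0$, then $g_1(\alpha)g_2(\alpha)^{-1}\in\Gamma^{\di}_\varepsilon$, i.e.\ $g_1(\alpha)$ and $g_2(\alpha)$ are multiplicatively dependent modulo $\Gamma^{\di}_\varepsilon$; if $g_1$ and $g_2$ both have at least two distinct roots this is again Theorem~\ref{thm:multdep-poly2}, while if one of them, say $g_2=c(X-\theta)^{b}$, has a single root, then the hypothesis forces $g_1$ to have at least two distinct roots and the substitution $\xi=\alpha-\theta$ turns $g_1(\alpha)=\gamma\beta\,g_2(\alpha)$ into a Schinzel--Tijdeman equation $\widetilde g_1(\xi)=(\text{unit})\cdot\gamma\beta\,\xi^{b}$, with $\widetilde g_1$ still having two distinct roots.

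The substantial case is $k_1k_2\ne0$ and $k_1+k_2\ne0$. Here I would work over the splitting field $L$ of $f_1f_2$ and enlarge $S$ to a finite set $S'$ of places of $L$ containing all infinite places, all places above $S$, and all places at which some leading coefficient, some difference of two roots of $f_1f_2$, or the resultant of any two of $d,g_1,g_2$ fails to be a unit, so that $\Gamma\subseteq O_{S'}^{*}$ and distinct roots of $f_1f_2$ remain distinct modulo each $w\notin S'$. Because $\gamma\in\Gamma^{\di}$ is a unit away from $S'$ and $\h(\beta)\le\varepsilon$, the element $f_1(\alpha)^{k_1}f_2(\alpha)^{k_2}=\gamma\beta$ of $K^{*}$ factors as $u_0v_0$ with $u_0$ an $S'$-unit and $\h(v_0)=O(\varepsilon)$. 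Comparing, for $w\notin S'$, the valuations on the two sides of $f_1(\alpha)^{k_1}f_2(\alpha)^{k_2}=u_0v_0$: at a place where $\alpha$ is $w$-close to a root $\rho$ of $f_1f_2$ the left side has valuation $(k_1\mu_1(\rho)+k_2\mu_2(\rho))\,w(\alpha-\rho)$ with $\mu_i(\rho)$ the multiplicity of $\rho$ in $f_i$, and at a place where $\alpha$ has a pole it has valuation $(k_1\deg f_1+k_2\deg f_2)\,w(\alpha)$. Using this and the coprimality/separation of the factors of $f_1,f_2$, one splits the factorisation of $\alpha$ and of its shifts $\alpha-\rho$ into an $S'$-unit part and a small-height part, and arrives at an $S$-unit equation with bounded error in the variables built from $\alpha$; its non-triviality --- i.e.\ that it is \emph{not} identically satisfied --- is guaranteed exactly by the hypothesis, the worst case being $k_1\deg f_1+k_2\deg f_2=0$ (which, as $\gcd(k_1,k_2)=1$, pins down $(k_1,k_2)$ up to sign), where $\psi:=f_1^{k_1}f_2^{k_2}$ is a rational function of degree $0$ and the hypothesis ensures that $\psi$ or $\psi^{-1}$ has at least two distinct zeros. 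The ensuing quantitative bound --- via the new Schinzel--Tijdeman theorem applied to a polynomial with two distinct roots, or via the approximate $S$-unit equation --- bounds $\h(\alpha)$ uniformly, contradicting Northcott's theorem since all our $\alpha$ lie in $K$.

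The hard part is this last case. Two features need care. First, $\gamma$ and $\beta$ need not individually lie in $L$, only their product does, so one must use that $\gamma\in\Gamma^{\di}$ has bounded shape --- a fixed torsion element times a rational linear combination of a fixed basis of $\Gamma$ --- in order to separate the $\Gamma$-part of $f_1(\alpha)^{k_1}f_2(\alpha)^{k_2}$ from its small-height remainder $v_0$. Second, the valuation bookkeeping only controls the places away from $S'$, so it never by itself bounds $\h(\alpha)$; the actual bound must come from the Diophantine input, and for it to apply one must check, in every sub-case, that the polynomial fed into the Schinzel--Tijdeman theorem genuinely has two distinct roots. That verification is the real role of the hypothesis that $f_1,f_2$ cannot multiplicatively generate a power of a linear fractional function, together with $\deg f_i\ge2$ and each $f_i$ having two distinct roots: these conditions exclude precisely the degenerate configurations in which the relevant polynomial collapses to (a power of) a linear polynomial or to a constant --- which are also the configurations that would produce infinitely many $\alpha$, as the examples following Theorem~\ref{thm:multdep-poly2} show.
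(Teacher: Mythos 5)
There is a genuine gap, and it is structural. Your endgame in every case is to ``bound $\h(\alpha)$ uniformly, contradicting Northcott's theorem''. But no such height bound is available: the finiteness in Theorem~\ref{thm:multdep-poly1} is \emph{not} effective (the paper remarks this explicitly), because once the exponents $(k_1,k_2)$ are pinned down, the remaining equation $f_1(\alpha)^{k_1}f_2(\alpha)^{k_2}\in O_{S'}^*$ is an instance of Lemma~\ref{lem:KGa} --- finiteness of $\{\alpha: g(\alpha)\in\Gamma\}$ for a rational function $g$ not of the form $a(X-b)^d$ or $a(X-b)^d/(X-c)^d$ --- which rests on Faltings' theorem. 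Your proposal never invokes this lemma; instead you repeatedly assign its job to the generalised Schinzel--Tijdeman theorem, which cannot do it. For example, in the sub-case $k_2=0$ the equation is $f_1(\alpha)\in\Gamma^{\di}_\varepsilon$; Schinzel--Tijdeman bounds the exponent $m$ in $f(x)=by^m$ with $x,y$ independent unknowns, and here there is no unknown exponent and no independent $y$, so it says nothing. The same misapplication occurs when you call $\widetilde g_1(\xi)=(\text{unit})\cdot\xi^{b}$ ``a Schinzel--Tijdeman equation'': the exponent $b$ is already fixed and the ``$y$'' equals the ``$x$'', so this is again a value-in-a-group problem for Lemma~\ref{lem:KGa}, not a superelliptic equation. (Also, your claim in the $k_1+k_2=0$ case that the hypothesis forces $g_1$ to have two distinct roots when $g_2=c(X-\theta)^b$ is false: $g_1=(X-1)^2$, $g_2=(X-2)^3$ satisfies the hypothesis with both $g_i$ having a single root.)

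The second, related gap is that the one place where Theorem~\ref{thm:GenS-T} is genuinely needed --- bounding the a priori unbounded exponents $k_1,k_2$ when $\alpha\in O_{S'}$, $k_1k_2<0$ and no valuation identity pins them down --- is left to a vague ``splitting of the factorisation into an $S'$-unit part and a small-height part''. The paper's mechanism is concrete: with $\gcd(k_1,k_2)=1$ choose $s,t$ with $sk_1+tk_2=1$ and write $f_1(\alpha)=\gamma^s\bigl(f_1(\alpha)^{-t}f_2(\alpha)^s\bigr)^{-k_2}$, where $z:=f_1(\alpha)^{-t}f_2(\alpha)^s$ is an $S'$-integer and not an $S'$-unit; Theorem~\ref{thm:GenS-T} applied to $f_1(x)=\gamma^s z^{m}$ (with $N_{S'}(\gamma^s)=1$) then bounds $m=-k_2$, and symmetrically $k_1$. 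Your alternative of forcing $k_1\mu_1(\rho)+k_2\mu_2(\rho)=0$ at a root $\rho$ that $\alpha$ approaches $w$-adically could in principle also pin down the exponents, but you do not carry it out, you do not treat the sub-case where $\alpha$ approaches no root (all $f_i(\alpha)$ are $S'$-units, which again needs Lemma~\ref{lem:KGa}), and in any event the conclusion for fixed $(k_1,k_2)$ must be finiteness via Lemma~\ref{lem:KGa}, not a height bound. As written, the proposal does not constitute a proof.
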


In Theorem~\ref{thm:multdep-poly1}, the condition related to linear fractional function
cannot be removed. See the example below Theorem~\ref{thm:multdep-poly2}. 
Here, we view non-zero constants as linear fractional functions.  

We remark that in Theorem~\ref{thm:multdep-poly1} we can  replace the condition related to linear fractional function with the total number of distinct roots of $f_1$ and $f_2$ which are not common roots being at least three.

We also remark that the results in Theorems~\ref{thm:multdep-poly2} and \ref{thm:multdep-poly1} are both not effective,
due to Lemma~\ref{lem:KGa}.

As a direct consequence of Maurin's theorem \cite[Th{\'e}or{\`e}me 1.2]{Mau} (see \cite{BHMZ} for an effective version),
if $f_1, f_2, \ldots, f_n \in K(X)$ are such that  $X,f_1,\ldots, f_n$ are multiplicatively independent modulo $\Gamma$, then the set
\begin{equation}
\label{eq:G div}
\left\{\alpha \in \Gamma^\di: f_1(\alpha),\ldots,f_n(\alpha)
\textrm{ multiplicatively dependent mod $\Gamma^\di$}\right\}
\end{equation}
is finite (see~\cite[Lemma 3.2]{OstShp1} for more details). This is an effective generalisation of Liardet's theorem \cite[Th{\'e}or{\`e}me 1]{Lia} on division points on curves; see also \cite[Theorem 2.2]{BEGP} for an effective version of Liardet's result.

We remark that by definition, multiplicative dependence modulo $\Gamma$ is equivalent to multiplicative dependence modulo $\Gamma^\di$.

Using \cite[Th{\'e}or{\`e}me 1.10]{Mau11}, which improves \cite[Th{\'e}or{\`e}me 1.7]{Mau},
we are able to extend this conclusion
by enlarging $\Gamma^\di$ to $\Gamma^\di_\varepsilon$ for certain  $\varepsilon >0$ (but in a non-effective manner).

\begin{theorem}
\label{thm:multdep-rat2}
Let $f_1, f_2, \ldots, f_n \in K(X)$ be such that $X,f_1, \ldots, f_n$ are multiplicatively independent modulo $\Gamma$.
Then, there exists a real number $\varepsilon >0$ for which there are only finitely many elements
 $\alpha \in \Gamma^\di_\varepsilon$ such that $f_1(\alpha), \ldots, f_n(\alpha)$ are multiplicatively dependent modulo $\Gamma^\di_\varepsilon$.
\end{theorem}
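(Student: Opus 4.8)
The plan is to reduce the statement to the known finiteness of the set \eqref{eq:G div} combined with the Bogomolov-type discreteness furnished by Maurin's refinement \cite[Th\'eor\`eme 1.10]{Mau11}, which allows a small height perturbation of the anomalous subvarieties. Write $g_1=X$ and $g_{i+1}=f_i$ for $1\le i\le n$, so that by hypothesis $g_1,\dots,g_{n+1}$ are multiplicatively independent modulo $\Gamma$. Consider the morphism $\varphi\colon \mathbb{A}^1\dashrightarrow \mathbb{G}_{\mathrm m}^{\,n+1}$ given by $\varphi(\alpha)=(g_1(\alpha),\dots,g_{n+1}(\alpha))$, defined away from the finitely many poles and zeros of the $f_i$, and let $C=\overline{\varphi(\mathbb{A}^1)}$ be the Zariski closure of its image, an irreducible curve in $\mathbb{G}_{\mathrm m}^{\,n+1}$. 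The multiplicative independence of $g_1,\dots,g_{n+1}$ modulo $\Gamma$ guarantees that $C$ is not contained in any translate of a proper subtorus by a point of $\Gamma^{\di}$; in particular $C$ is not contained in a proper subtorus, so $C$ is a curve of the type to which the Bogomolov/Zilber--Pink machinery applies.

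The first step is to record precisely what needs to be shown: an element $\alpha\in\Gamma^{\di}_\varepsilon$ for which $f_1(\alpha),\dots,f_n(\alpha)$ are multiplicatively dependent modulo $\Gamma^{\di}_\varepsilon$ produces a point $P=\varphi(\alpha)\in C$ such that some coordinate relation holds, and one checks that this forces $P$ to lie within height $O(\varepsilon)$ (with an absolute implied constant depending only on the $f_i$, using standard height inequalities along the map $\varphi$ and the fact that $\h(\beta)\le\varepsilon$ for the perturbing factor $\beta$) of the union of the torsion translates of proper subtori of $\mathbb{G}_{\mathrm m}^{\,n+1}$ of codimension $\ge 2$ — the relation ``$\alpha\in\Gamma^{\di}_\varepsilon$'' is one multiplicative condition on the $X$-coordinate and ``multiplicatively dependent mod $\Gamma^{\di}_\varepsilon$'' is a second, independent one. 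Thus $P$ lies in a bounded-height neighbourhood of $C\cap\mathcal{Z}$, where $\mathcal{Z}$ is the union of all algebraic subgroups of codimension $\ge 2$. The second step is to invoke \cite[Th\'eor\`eme 1.10]{Mau11}: since $C$ is a curve in $\mathbb{G}_{\mathrm m}^{\,n+1}$ not lying in any proper subtorus, there is an $\varepsilon_0>0$ such that the set of points of $C$ of height $\le\varepsilon_0$ lying within height $\le\varepsilon_0$ of $\mathcal{Z}$ is finite. Choosing $\varepsilon$ small enough that $O(\varepsilon)\le\varepsilon_0$ makes the set of admissible $P$ finite, hence (as $\varphi$ is finite-to-one on its domain) the set of admissible $\alpha$ is finite.

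The main obstacle I expect is the bookkeeping in the second step: one must be careful that the two multiplicative conditions (membership of $\alpha$ in $\Gamma^{\di}_\varepsilon$, and the dependence relation) really cut out codimension at least two in $\mathbb{G}_{\mathrm m}^{\,n+1}$ rather than collapsing — this is exactly where the hypothesis that $X,f_1,\dots,f_n$ are multiplicatively independent \emph{modulo $\Gamma$} (not merely multiplicatively independent) is used, to rule out that the curve $C$ already sits inside a torsion translate of a codimension-one subgroup defined by $\Gamma$, which would make the perturbed intersection infinite. A secondary technical point is to translate the ``$\Gamma^{\di}_\varepsilon$'' perturbation, which is stated in terms of the Weil height of a multiplicative factor $\beta$, into the height-neighbourhood language of \cite{Mau11}; this is routine once one notes that multiplying a coordinate by $\beta$ with $\h(\beta)\le\varepsilon$ moves the point by at most $\varepsilon$ in the relevant height on $\mathbb{G}_{\mathrm m}^{\,n+1}$, and that passing to $\Gamma^{\di}$ (roots of elements of $\Gamma$) only rescales heights by bounded factors on the relevant finite-index considerations. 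Finally, as already flagged in the paper after Theorem~\ref{thm:multdep-poly1}, the resulting bound on $\varepsilon$ and the finiteness are ineffective, inherited from the ineffectivity in \cite[Th\'eor\`eme 1.10]{Mau11}.
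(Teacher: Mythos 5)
There is a genuine gap, and it sits exactly at the point your proposal flags as ``routine''. Your reduction places $P=\varphi(\alpha)$ in $\Gm^{\,n+1}$ and asserts that the two relations (membership of $\alpha$ in $\Gamma^{\di}_\varepsilon$, and the dependence of the $f_i(\alpha)$ modulo $\Gamma^{\di}_\varepsilon$) put $P$ within height $O(\varepsilon)$ of the union of \emph{torsion} translates of codimension-$\ge 2$ subgroups. But each relation has the form $\prod(\text{coordinates})^{a_i}=\gamma\cdot\beta$ with $\gamma\in\Gamma^{\di}$ and $\h(\beta)\le\varepsilon$, and $\Gamma^{\di}$ is neither torsion nor of bounded height: it contains, e.g., all $2^N$ when $\Gamma=\langle 2\rangle$, and its elements of small height are still dense in height near $0$ without being roots of unity. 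So the factor coming from $\Gamma^{\di}$ can be absorbed neither into the torsion translate nor into the $O(\varepsilon)$ height perturbation; your remark that passing to $\Gamma^{\di}$ ``only rescales heights by bounded factors'' is false, and with it the reduction to a thickened union of codimension-$2$ algebraic subgroups of $\Gm^{\,n+1}$ collapses. (A secondary slip: the set you want finite is not ``points of $C$ of height $\le\varepsilon_0$ near $\mathcal Z$'' but points of $C$ of arbitrary height lying in the $\varepsilon_0$-thickening of $\mathcal Z$.)

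The paper's proof repairs precisely this by enlarging the ambient torus: it takes generators $g_1,\ldots,g_r$ of $\Gamma$ and works with the curve $\mathcal C=\{(\alpha,f_1(\alpha),\ldots,f_n(\alpha),g_1,\ldots,g_r)\}\subset\Gm^{\,n+r+1}$, whose last $r$ coordinates are constant. A relation modulo $\Gamma^{\di}_\varepsilon$ then becomes a genuine monomial relation among the coordinates of the big torus, up to a single factor of height $\le\varepsilon$ (after extracting roots, which is why the paper writes $\beta^{k_0}$ and $\gamma^t$). The two relations have linearly independent exponent vectors in $\Z^{n+r+1}$, so the point of $\mathcal C$ is a product of a point in a codimension-$2$ algebraic subgroup with a point of height $\le 2\varepsilon$, and Maurin's Th\'eor\`eme 1.10 applies; the hypothesis of multiplicative independence of $X,f_1,\ldots,f_n$ \emph{modulo $\Gamma$} is exactly what makes $X,f_1,\ldots,f_n,g_1,\ldots,g_r$ multiplicatively independent, i.e.\ keeps $\mathcal C$ out of every proper algebraic subgroup of $\Gm^{\,n+r+1}$. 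If you want to salvage your version in $\Gm^{\,n+1}$, you would need a form of Maurin's theorem for $\Gamma^{\di}_\varepsilon$-translates of codimension-$2$ subgroups (a Mordell--Lang plus Bogomolov plus Zilber--Pink statement); the coordinate-augmentation trick is the standard and cleanest way to obtain it from the translate-free statement.
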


We end this section with an open problem. We would like to combine Theorem~\ref{thm:multdep-poly2}  with the finiteness of the set~\eqref{eq:G div}, and ask the following question:

\begin{problem}  \label{prob1}
Let $f_1,\ldots,f_n \in K(X)$ be non-zero rational functions. 
Under what conditions is the following set
\begin{equation}
\label{eq:K G div}
\left\{\alpha \in K(\Gamma^\di) : f_1(\alpha),\ldots,f_n(\alpha)
\textrm{ multiplicatively dependent mod $\Gamma^\di$}\right\}
\end{equation}
finite?
\end{problem}

When $\Gamma=\{1\}$, then $\Gamma^\di$ is the set of all roots of unity,  and $K(\Gamma^\di)$ is the 
cyclotomic closure of $K$. In this case, it has been proven in~\cite[Theorem 4.2]{OSSZ2} that if $f_1,\ldots,f_n$ do not multiplicatively generate a power of a linear fractional function, then the set~\eqref{eq:K G div} is finite (in fact, the result holds more generally for the abelian closure of $K$).

When $n=1$, Problem~\ref{prob1} becomes that for a non-zero rational function $f \in K(X)$, under which condition 
the set  $\{\alpha \in K(\Gamma^\di) : \ f(\alpha) \in \Gamma^\di\}$ is finite. This would extend the cyclotomic version of the Hilbert Irreducibility Theorem proved by Dvornicich and Zannier~\cite[Corollary 1]{DZ} in the case when $\Gamma=\{1\}$.
Recall also that a special case of a general conjecture of R\'emond (see \cite[Conjecture 3.4]{Remond}) asserts that there exists a constant $c_\Gamma$ such that 
for any $\alpha \in K(\Gamma^\di) \setminus \Gamma^\di$, $\h(\alpha) \ge c_\Gamma$ 
(see \cite[Conjecture 1.1 (c)]{Pot}, and see \cite[Theorem 1.3]{Amo} for a non-trivial example). 
Clearly, under this conjecture, the finiteness of the set $\{\alpha \in K(\Gamma^\di) : \ f(\alpha) \in \Gamma^\di\}$ implies 
the finiteness of the set $\{\alpha \in K(\Gamma^\di) : \ \h(f(\alpha)) < \varepsilon\}$ for any $\varepsilon < c_\Gamma$. 

\section{Preliminaries}
%%%%%%%%%%%%%%%%%%%%%%%%%%%%%%%%%%%%%%%%%%%%%%%%%%%%%%%%%%%%%%%%%%%%%%

\subsection{On some intersections with approximate groups and algebraic subgroups}
We define the set $\rA(K,H)$ as the set of nonzero elements in the algebraic number
field $K$ of height at most $H$, that is,
$$
\rA(K,H) =\left\{\alpha \in  K^*:~ \h(\alpha)\le H \right\}.
$$
We note that by Northcott's Theorem the set $ \rA(K, H)$  is a finite set.

We need the following result from \cite[Theorem 2.1]{OstShp}.

\begin{lemma}
\label{lem:K H}
Let $\{g_1, \ldots, g_r\}$ be a set of generators of $\Gamma$, which minimises $H = \max_{i=1, \ldots, r} \h( g_i)$. Then, for every
$\varepsilon> 0$, we have
$$
K^*\cap \Gamma^{\di}_\varepsilon \subseteq \left \{\beta \eta :~ (\beta, \eta) \in
\Gamma \times \rA(K, \varepsilon  + rH)\right\}.
$$
\end{lemma}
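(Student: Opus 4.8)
The plan is to unwind the definitions and use the fact that $\Gamma^{\di}$ can be written in terms of $\Gamma$ after passing to a root, together with the structure of $\Gamma$ as a finitely generated group. Let $\gamma \in K^* \cap \Gamma^{\di}_\varepsilon$. By definition of $\Gamma^{\di}_\varepsilon$, we can write $\gamma = \alpha\beta$ with $\alpha \in \Gamma^{\di}$ and $\beta \in \overline{K}^*$ satisfying $\h(\beta) \le \varepsilon$. Since $\alpha \in \Gamma^{\di}$, there is an integer $m \ge 1$ with $\alpha^m \in \Gamma$, so $\alpha^m = g_1^{a_1} \cdots g_r^{a_r}$ for some $a_i \in \Z$.

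The key step is to replace $\alpha$ by an element of $\Gamma$ at the cost of enlarging the height of the "small" factor in a controlled way. Writing each exponent as $a_i = m q_i + s_i$ with $0 \le s_i < m$ (Euclidean division), set $\eta_0 = g_1^{q_1}\cdots g_r^{q_r} \in \Gamma$ and $\zeta = g_1^{s_1}\cdots g_r^{s_r}$, so that $\alpha^m = \eta_0^m \zeta$, i.e. $(\alpha/\eta_0)^m = \zeta$. Thus $\alpha/\eta_0$ is an $m$-th root of $\zeta$, and by the standard height identity $\h(\alpha/\eta_0) = \frac{1}{m}\h(\zeta) \le \frac{1}{m}\sum_i s_i \h(g_i) \le \frac{1}{m}\sum_i (m-1)\h(g_i) \le \sum_{i=1}^r \h(g_i) \le rH$. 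Now write $\gamma = \alpha\beta = \eta_0 \cdot \big((\alpha/\eta_0)\beta\big)$; the first factor $\eta_0$ lies in $\Gamma$, and for $\eta := (\alpha/\eta_0)\beta$ we have $\h(\eta) \le \h(\alpha/\eta_0) + \h(\beta) \le rH + \varepsilon$ by subadditivity of the Weil height under multiplication.

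Finally, one must check that $\eta \in K^*$: since $\gamma \in K^*$ and $\eta_0 \in \Gamma \subseteq K^*$, we get $\eta = \gamma/\eta_0 \in K^*$. Therefore $\gamma = \eta_0 \eta$ with $(\eta_0, \eta) \in \Gamma \times \rA(K, \varepsilon + rH)$, which is exactly the claimed inclusion.

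The only mild subtlety—the part worth stating carefully rather than the genuine obstacle—is bookkeeping the height bound: one needs the elementary facts that $\h(\xi^{1/m}) = \frac{1}{m}\h(\xi)$ for any nonzero algebraic $\xi$ and any fixed $m$-th root, that $\h(\xi_1\xi_2) \le \h(\xi_1)+\h(\xi_2)$, and that $\h(g_i^{s_i}) = |s_i|\h(g_i)$, all of which are standard. No genuine obstacle arises; the result is essentially a direct manipulation of the definitions once one has the idea of reducing exponents modulo $m$ to split off an honest element of $\Gamma$.
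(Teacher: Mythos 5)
Your argument is correct: the reduction of the exponents modulo $m$ to split $\alpha$ into an element of $\Gamma$ times an algebraic number of height at most $rH$, followed by absorbing that factor into $\eta=\gamma/\eta_0\in K^*$, is exactly the standard proof. The paper itself gives no proof of this lemma but simply cites \cite[Theorem 2.1]{OstShp}, where the argument is essentially the one you wrote.
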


%We note that by Northcott's Theorem the set $ \rA(K, \varepsilon  + rH)$ in Lemma~\ref{lem:K H} is a finite set.

As usual, for any non-constant rational function $f \in K(X)$, the degree of $f$ is defined to be the maximum of the degrees of its numerator and denominator.  

The following result is \cite[Theorem 1.2 (a)]{BOSS}.

\begin{lemma}
\label{lem:KGa}
Let $f \in K(X)$ be a rational function of degree $d \ge 2$.
Assume that $f$ is not of the form $a(X-b)^d$ or $a(X-b)^d / (X-c)^d$ with $a,b,c \in K$, $a(b-c)\ne0$, and $d \in \Z$.
Then, the set $\{\alpha \in K: \, f(\alpha) \in \Gamma \}$ is finite.
\end{lemma}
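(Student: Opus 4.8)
The plan is to use the hypothesis on $f$ to reduce to the classical finiteness statement for $S$-integral points on $\PP^1$ minus at least three points, i.e.\ ultimately to the finiteness of solutions of $S$-unit equations. The first step is to reformulate the hypothesis. Let $Z\subseteq\PP^1(\ov K)$ be the set of zeros and poles of $f$, taken without multiplicity; it is the disjoint union of the nonempty sets $f^{-1}(0)$ and $f^{-1}(\infty)$, so $|Z|\ge 2$. If $|Z|=2$, then $f^{-1}(0)=\{\beta\}$ and $f^{-1}(\infty)=\{\gamma\}$ each consist of a single point of multiplicity $d$; being $\Gal(\ov K/K)$-stable singletons, $\beta,\gamma\in\PP^1(K)$, and then $f$ is forced to have the special shape forbidden by the hypothesis (with $\beta$ or $\gamma$ possibly $\infty$). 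Hence the hypothesis gives $|Z|\ge 3$. Finally, replacing $X$ by $\alpha_0+X^{-1}$ for some $\alpha_0\in K$ that is not a zero or pole of $f$ changes neither $|Z|$ nor the finiteness of the set under study, so I may assume $\infty\notin Z$; then, clearing denominators, $f=P/Q$ with $P,Q\in O_K[X]$ coprime and $\deg P=\deg Q=d$.

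Next I would fix the arithmetic data. Let $L=K(Z)$ and let $T$ be a finite set of places of $L$ containing all archimedean places, all places above $S$, and all places at which $\Gamma$, a leading coefficient of $P$ or $Q$, or a difference of two distinct points of $Z$ fails to be a unit. Then $\Gamma\subseteq O_T^*$, and the points of $Z$ are $T$-integral and pairwise $T$-coprime; this discards only finitely many places.

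The crux is an integrality statement. Let $\alpha\in K$ with $f(\alpha)\in\Gamma$; then $\alpha\notin Z$, and $v(f(\alpha))=0$ for every place $v\notin T$ since $f(\alpha)\in O_T^*$. I claim $\alpha$ does not reduce to a point of $Z$ modulo any such $v$. If it reduced to a zero $\beta$ of $f$, of multiplicity $a$, then — using that the remaining points of $Z$, and $\alpha$, are $v$-adically separated — one computes $v(P(\alpha))=a\,v(\alpha-\beta)>0$ while $v(Q(\alpha))=0$, so $v(f(\alpha))>0$, a contradiction; reducing to a pole gives $v(f(\alpha))<0$ similarly. Thus $\alpha$, viewed in $\PP^1(L)$, is $T$-integral relative to the reduced divisor $D=\sum_{P\in Z}P$ of degree $|Z|\ge 3$. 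I expect this valuation bookkeeping — in particular keeping everything consistent when $v(\alpha)<0$, where $\alpha$ reduces to $\infty\notin Z$ — to be the step needing the most care, although it is elementary.

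To conclude, choose three points $P_1,P_2,P_3\in Z$ (all finite, by the reduction above). By the integrality statement the quotients $(\alpha-P_i)/(\alpha-P_j)$ lie in $O_T^*$, so
\[
u:=\frac{(\alpha-P_1)(P_2-P_3)}{(\alpha-P_2)(P_1-P_3)},\qquad w:=\frac{(\alpha-P_3)(P_1-P_2)}{(\alpha-P_2)(P_1-P_3)}
\]
both lie in $O_T^*$, and the identity $(\alpha-P_1)(P_2-P_3)+(\alpha-P_2)(P_3-P_1)+(\alpha-P_3)(P_1-P_2)=0$ yields $u+w=1$. By the classical finiteness theorem for $S$-unit equations (equivalently, Siegel's theorem on $S$-integral points of $\PP^1\setminus\{0,1,\infty\}$) there are only finitely many such pairs $(u,w)$, and each determines at most one $\alpha$; hence $\{\alpha\in K:\ f(\alpha)\in\Gamma\}$ is finite.
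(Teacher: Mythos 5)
Your proof is correct, but it is worth noting that the paper does not prove this lemma at all: it is quoted verbatim from \cite[Theorem 1.2 (a)]{BOSS}, and the only commentary offered is the remark that the result is ineffective because the cited proof passes through Faltings' theorem. Your argument is therefore a genuinely different (and self-contained) route. The key observation — that the excluded shapes $a(X-b)^d$ and $a(X-b)^d/(X-c)^d$ are exactly the rational functions whose divisor of zeros and poles is supported on two points of $\PP^1$, since a Galois-stable singleton zero or pole set must be $K$-rational — is right, and it correctly converts the hypothesis into $|Z|\ge 3$. The valuation bookkeeping is sound: after enlarging $T$ so that the leading coefficients and the pairwise differences of the points of $Z$ are $T$-units, at most one factor $\alpha-P_i$ can have positive valuation at a given $v\notin T$, and when $v(\alpha)<0$ all factors acquire the same valuation $v(\alpha)$, so in every case $v(f(\alpha))\neq 0$ unless $\alpha$ avoids $Z$ modulo $v$; this gives $(\alpha-P_i)/(\alpha-P_j)\in O_T^*$, and the three-term identity reduces everything to the unit equation $u+w=1$, with $\alpha$ recovered injectively from the cross-ratio $u$. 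Two observations. First, your approach buys something the paper explicitly says it lacks: since the finiteness of solutions of $u+w=1$ in $O_T^*$ is effective by Baker's method, your proof yields an \emph{effective} version of the lemma as stated (the ineffectivity remark in the paper presumably pertains to the more general statements proved in \cite{BOSS}, not to this special case). Second, a purely cosmetic point: you overload the letter $P$ for both the numerator polynomial and the points of $Z$, and the set $S$ you invoke when defining $T$ should be introduced as a set with $\Gamma\subseteq O_S^*$ (your later clause about places where $\Gamma$ fails to be a unit already covers this, so nothing is missing).
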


We remark that the result in Lemma~\ref{lem:KGa} is not effective, due to the use of
the  Faltings theorem~\cite{Falt83}  about finiteness of rational points on a curve. See also~\cite[Corollary 2.2]{OstShp} and references therein. 

%The following lemma is a special case of \cite[Theorem 1']{BMZ}. \commA{moved this result in this section}
%
%\begin{lemma}  \label{lem:BMZ}
%Let $f_1,\ldots, f_n \in \overline{\Q}(X)$ be non-zero rational functions which are multiplicatively independent modulo constants. 
%Then, the elements  $\alpha \in \overline{\Q}$ such that $f_1(\alpha), \ldots, f_n(\alpha)$ are multiplicatively dependent are of bounded height.
%\end{lemma}

We conclude this section with a  result of Maurin~\cite[Th{\'e}or{\`e}me 1.10]{Mau11}, which we present in our setting of  parametric curves by noticing \cite[Remarque 1.3]{Mau11}.

For this we introduce the following notation: We define $\cH^{[2]}$ to be the union of all algebraic subgroups  in $\G_m^n$ of codimension at least $2$. For $\varepsilon>0$, we let $\cH^{[2]}_\varepsilon$ be defined similarly as in Section~\ref{subsection:notation}, that is,
$$
\cH^{[2]}_\varepsilon=\{\vec{u}\cdot \vec{v} : \vec{u} \in \cH^{[2]}, \vec{v} \in \G_m^n \text{ with } \h(\vec{v}) \leq \varepsilon \}.
$$

We have the following result, which is a special case of \cite[Th{\'e}or{\`e}me 1.10]{Mau11}.

\begin{lemma}
\label{lem:Maurin}
Let $g_1,\ldots,g_r\in K^*$ and $f_1,\ldots,f_n\in K(X)$ be such that $f_1,\ldots,f_n,g_1,\ldots,g_r$ are multiplicatively independent. Let 
$$
\cC=\{(f_1(\alpha),\ldots,f_n(\alpha),g_1,\ldots,g_r) : \alpha \in \ov{K}\} \subset \G_m^{n+r}.
$$
Then there exists a real number $\varepsilon>0$ such that $\cC\cap \cH^{[2]}_\varepsilon$ is finite.
\end{lemma}

\subsection{On some functional properties of rational functions}
We need the following special case of the result of Young~\cite[Corollary~1.2]{Young}, which 
generalises the previous result of Gao~\cite[Theorem~1.4]{Gao}  to multiplicative independence of 
consecutive iterations of rational functions over fields of characteristic zero. 

\begin{lemma}
\label{lem:Young} 
Let $F$ be an arbitrary field of characteristic zero, and let 
$f \in F(X)$ be a rational function of degree $d \ge 2$ which is not of the form $aX^{\pm d}$. 
Then, for any integer $n\ge 1$, the  iterates 
$f^{(1)}(X), \ldots,f^{(n)}(X)$ are multiplicatively independent modulo constants.
\end{lemma}

We also need the following simple lemma. 

\begin{lemma}
\label{lem:linear}
Let $f \in K(X)$ be a rational function such that it has no linear factor. 
Then, for any non-constant rational function $g \in K(X)$, 
 the rational function $f \circ g$  has no linear factor. 
\end{lemma}

\begin{proof}
First, we note that it suffices to prove that for any monic irreducible factor, say $p(X)$, of either the numerator or the denominator of $f$, 
 the rational function $p \circ g$  has no linear factor. 

By contradiction, suppose that  the rational function $p \circ g$  has a linear factor. 
Then, there is an element, say $\alpha$, in $K$ such that $p \circ g (\alpha) = 0$. 

If $g(\alpha)$ is well-defined, then $g(\alpha) \in K$, which means that the polynomial $p$ has a root (that is, $g(\alpha)$) in $K$. 
However, by assumption $p$ is an irreducible polynomial over $K$ of degree at least 2. 
So, we get a contradiction. 

Now, if $g(\alpha)$ is not well-defined, then $\alpha$ is a pole of $g$. 
Write $p = X^d + a_{1}X^{d-1} + \cdots + a_{d-1} X + a_d$ and $g = u/w$ with $u, w \in K[X]$ and $\gcd(u,w)=1$. 
Since $\alpha$ is a pole of $g$, we have $w(\alpha) = 0$. 
Note that $p \circ g = p(u/w) = \frac{1}{w^d}(u^d + a_1 u^{d-1}w + \cdots + a_{d-1}uw^{d-1} + a_d w^d)$. 
Then, since $p \circ g (\alpha) = 0$ and $w(\alpha)=0$, we obtain $u(\alpha) = 0$. 
So, $\alpha$ is a common root of $u$ and $w$, which contradicts with $\gcd(u, w)=1$. 

Therefore,  the rational function $f \circ g$  has no linear factor. 
\end{proof}

\subsection{Generalised Schinzel-Tijdeman theorem}
Another important tool for our results is the following general version, established in \cite{BBGMOS}, of the Schinzel-Tijdeman
theorem~\cite{SchTij}, which extends~\cite[Theorem~2.3]{BEG} and \cite[Lemma 2.8]{BOSS}. We present it in a simplified form, which is sufficient for our purposes.

%For the convenience of the reader, we reproduce it here. 

%Let $K$ be a  number field of degree $d$, and with discriminant $D_{K}$, the  ring of integers $O_{K}$ and the set of places $M_{K}$. 
%For any prime ideal $\fp$ of $O_{K}$, $N_K(\fp):= \# O_K / \fp$ is the norm of $\fp$.
%
%Let $S$ be a finite subset of $M_K$ containing all the infinite places.
%For $b\in K$, we define the {\it $S$-norm} of $b$ by 
%\begin{equation}  \label{eq:S-norm}
%N_S(b)=\prod_{v\in S}|b|_v, 
%\end{equation}
% where  $|\cdot|_v$ is the normalized absolute valuation of $K$ at the place $v$. 
%
%For any polynomial $f= a_0 X^n + \ldots + a_n \in K[X]$ of degree $n \ge 1$,
%as usual its absolute logarithmic \textit{Weil height}, denoted by $\h(f)$, is defined to be
%the absolute logarithmic Weil height of the projective vector $[1,a_0, \ldots, a_n]$.
%
%Put $s=\# S$ and
%\begin{eqnarray*}
%&&P_S = 1\ \ \mbox{if $S$ consists only of infinite places,}
%\\
%&&P_S = \max_{i=1, \ldots, t} N_K(\fp_i)  \ \ \mbox{if $\fp_1,\dots,\fp_t$ are all the prime ideals in $S$.}
%\end{eqnarray*}

Let $K$ be a  number field and $S$ a finite subset of $M_K$ containing all the infinite places.  The following theorem is proved in \cite[Theorem 2.2]{BBGMOS}. 

\begin{lemma}
\label{lem:GenS-T}
Let $f(X)=a_0X^n + \cdots +a_n\in O_S[X]$ be a polynomial of degree $n$ and with at least two distinct 
roots.  There is an effectively
computable constant $C(f,K,S)$, depending only on $f$, $K$ and $S$, so that
the following holds: if $b\in O_S^*$ and if the equation
\begin{equation}\label{eq:SchT}
f(x)=by^m \ \ \ in \ \ x,y\in O_S, \ m\in \Z, m \ge 3,
\end{equation}
has a solution $(x,y)$ with $y\ne 0$ and $y \notin O_S^*$, then  
$$
m \leq C(f,K,S). 
$$
%where 
%$$
%C= 4^{12n^2 s}(10n^2 s)^{38ns} e^{12nd\h(f)} |D_K|^{6n} P_S^{n^2} (\log^* P_S)^{3ns}\log^*N_S(b). 
%$$
\end{lemma}

We remark that, when $f$ has only simple roots, 
the result in Lemma~\ref{lem:GenS-T} 
 has been established in \cite[Lemma 2.8]{BOSS}, 
In addition, when $S$ only consists of infinite places, the result in Lemma~\ref{lem:GenS-T} 
 has been given in \cite[Theorem 10.3]{ST} (choosing $\tau = 0, z=1, \gamma =1, \varepsilon=b$ there).

%
%Theorem~\ref{thm:GenS-T} is the main tool for the proof of Theorem~\ref{thm:multdep-poly1}, applied with $b\in O_S^*$ in which case
% the upper bound on $m$ does not depend on $b$, because in this case $N_S (b) = 1$.  
% In fact, when $S$ only consists of infinite places, the non-dependence of the upper bound for $m$ upon $b\in O_S^*$ 
% has been established in \cite[Theorem 10.3]{ST} (choosing $\tau = 0, z=1, \gamma =1, \varepsilon=b$ there). 
%

\section{Proofs of the main results}

\subsection{Preliminary discussion}
\label{sec:pre}

Let~$S_\Gamma$ be the following set of places of $K$:
\begin{equation*}
  S_\Gamma := M_K^\infty \cup \bigl\{v\in M_K^0 : \,  \text{$v(\gamma) \ne 0$ for some $\g\in\Gamma$} \bigr\},
\end{equation*}
where, as usual, $v(\gamma)$ means the additive valuation of $v$ at $\gamma$.
Note that the set~$S_\Gamma$ is finite, since~$\Gamma$ is finitely generated.

As usual, we say that a polynomial
$$
f(X) =  a_0X^d+ \cdots + a_{d-1}X+ a_d \in K[X]
$$
has bad reduction at~$v\in M_{K}^0$
if either $v(a_i)<0$ for some~$i \ge 1$ or $v(a_0)\ne0$; otherwise
we say it has good reduction at $v$.

Let
$$
\ff=(f_1,\ldots,f_n) \in K[X]^n
$$
be a vector of non-constant polynomials
$$
f_i(X)=  a_{i,0} X^{d_i}+ \cdots +a_{i,d_i-1}X +a_{i,d_i}  \in K[X], \quad i = 1, \ldots, n,
$$
and we define
\begin{equation*}
\begin{split}
 & S_{\ff,\Gamma} \\
 & \quad =  S_\Gamma \cup
   \{v\in M_K^0 : \, \text{at least one of $f_1,\ldots, f_n$ has bad reduction at $v$} \}.
\end{split}
\end{equation*}
Note that $S_{\ff,\Gamma}$ is a finite set.
Moreover, each $f_i \in O_{S_{\ff,\Gamma}}[X]$, and in fact  for any $v\not\in S_{\ff,\Gamma}$ we have
\begin{equation}
\label{eq:qinSf}
v(a_{i,0}) = 0 \mand v(a_{i,j}) \ge 0, \quad  i=1, \ldots, n, \, j=1,\ldots, d_i.
\end{equation}

If 
$$
\ff=(f_1,\ldots,f_n) \in K(X)^n
$$
is a vector of non-constant rational functions, we will use the same notation $S_{\ff,\Gamma}$ for the set including $S_\Gamma$ and all the places $v\in M_K^0$ such that at least one of the numerators or denominators of $f_1,\ldots,f_n$ has bad reduction at $v$.

By definition, we have
$$
O_{S_\Gamma} \subseteq O_{S_{\ff,\Gamma}} \mand \Gamma \subseteq O_{S_\Gamma}^* \subseteq O_{S_{\ff,\Gamma}}^*.
$$
Note that $O_{S_{\ff,\Gamma}}^*$ is also a finitely generated subgroup of $K^*$.
Hence, it suffices to prove the main results by replacing $\Gamma$ with $O_{S_{\ff,\Gamma}}^*$. 
Then, in the sequel we will prove the main results by replacing $\Gamma^{\di}_\varepsilon$ with $O_{S_{\ff,\Gamma,\varepsilon}}^*$, where $S_{\ff,\Gamma,\varepsilon}$ is some finite set of places containing $S_{\ff,\Gamma}$ and depending also on $\varepsilon$.

\subsection{Proof of Theorem~\ref{thm:multdep-poly2}}

Let $\alpha \in K$ be such that there exist integers $k_1,\ldots, k_n$, not all zero such that
$$
f_1(\alpha)^{k_1} \cdots  f_n(\alpha)^{k_n} \in\Gamma^{\di}_\varepsilon.
$$
By Lemma~\ref{lem:K H} there exists $\beta \in O_{S_{\ff,\Gamma}}^*$ and $\eta \in K^*$ with $\h(\eta)\ll_{\varepsilon,\Gamma} 1$ such that
$$
f_1(\alpha)^{k_1} \cdots  f_n(\alpha)^{k_n}= \beta\eta.
$$
Since $\eta \in K^*$ is of bounded height depending only on $\varepsilon$ and $\Gamma$, by Northcott's theorem there are only finitely many such $\eta$. Thus we can enlarge the set $S_{\ff,\Gamma}$ to include all prime ideals that divide the finitely many elements $\eta$. We also include in this larger set the prime ideals outside $S_{\ff,\Gamma}$ that divide the product  $\displaystyle\prod_{1\le i \neq j\le n}$Res$(f_i,f_j)$ of all the resultants of $f_i$ and $f_j$ for $i\ne j$ (we recall that all $\Res(f_i,f_j)$ are $S_{\ff,\Gamma}$-integers),
which are only finitely many. We denote the new set by $S_{\ff,\Gamma,\varepsilon}$ and we note that $S_{\ff,\Gamma,\varepsilon}$ is still a finite set.

By the construction of the set $S_{\ff,\Gamma, \varepsilon}$,  we have 
$$
K^* \cap \Gamma^{\di}_\varepsilon \subseteq O_{S_{\ff,\Gamma,\varepsilon}}^*. 
$$
Thus, it suffices to prove the desired result by replacing $\Gamma^{\di}_\varepsilon$ with $O_{S_{\ff,\Gamma,\varepsilon}}^*$.

Now, we write
\begin{equation}
\label{eq:multrs2}
f_1(\alpha)^{k_1} \cdots  f_n(\alpha)^{k_n}= \gamma,\qquad  \gamma = \beta\eta \in O_{S_{\ff,\Gamma,\varepsilon}}^*.
\end{equation}

 If $n = 1$, since $f_1$ is a polynomial having at least two distinct roots and $O_{S_{\ff,\Gamma,\varepsilon}}^*$ is a finitely generated subgroup, we see that applying  Lemma~\ref{lem:KGa} to $f_1$ and $O_{S_{\ff,\Gamma,\varepsilon}}^*$ 
 gives the desired finiteness result.
 We thus suppose  that $n \geq 2$, and that the result is valid for $n-1$, in order to apply an induction.

We note that if some $k_i=0$, then the desired finiteness of $\alpha \in K$
satisfying~\eqref{eq:multrs2} follows directly from the induction hypothesis.
Hence, we can assume from now on that $k_1 \cdots k_n \ne 0$.

We now complete the proof case by case.

\textbf{Case I}: $\alpha \in  O_{S_{\ff,\Gamma,\varepsilon}}$.

In this case, since $\alpha \in  O_{S_{\ff,\Gamma,\varepsilon}}$ and $f_i\in O_{S_{\ff,\Gamma,\varepsilon}}[X]$ for any $i=1,\ldots,n$, we have
$$
f_1(\alpha), \ldots, f_n(\alpha) \in O_{S_{\ff,\Gamma,\varepsilon}}.
$$
We note that if $f_i(\alpha)\in O_{S_{\ff,\Gamma,\varepsilon}}^*$ for some $i \in \{1,\ldots,n\}$,
then Lemma~\ref{lem:KGa} implies the finiteness of such $\alpha \in K$ satisfying~\eqref{eq:multrs2}.

Thus, we now assume that  $f_1(\alpha),\ldots, f_n(\alpha)\not\in O_{S_{\ff,\Gamma,\varepsilon}}^*$.
This implies that there exists a prime ideal $\fp \not\in S_{\ff,\Gamma,\varepsilon}$ in $ K$ such that $v_\fp(f_1(\alpha))>0$.
 Moreover, since $f_i(\alpha)\in O_{S_{\ff,\Gamma,\varepsilon}}$, 
we have $v_\fp(f_i(\alpha))\ge 0$ for each $i=2, \ldots, n$.

If $k_1k_i > 0$ for each $i =2,\ldots, n$,
without loss of generality we can assume that $k_1,k_2,\ldots, k_n > 0$.
Then, the equation~\eqref{eq:multrs2} implies
$$
k_1v_\fp(f_1(\alpha))+ \cdots +k_nv_\fp(f_n(\alpha)) = 0.
$$
Since $v_\fp(f_1(\alpha))>0$ and $v_\fp(f_i(\alpha))\ge 0$ for each $i =2, \ldots, n$, we obtain a contradiction.

We now assume $k_1k_i < 0$ for some $i \in \{2,\ldots, n\}$.
In this case, without loss of generality, we assume $k_1>0, \ldots, k_m >0$ and $k_{m+1} <0 , \ldots, k_n < 0$ for some positive integer $m$.
So, the equation \eqref{eq:multrs2} becomes
$$
f_1(\alpha)^{k_1} \cdots f_{m}(\alpha)^{k_m} = \gamma f_{m+1}(\alpha)^{-k_{m+1}} \cdots f_{n}(\alpha)^{-k_n}.
$$
Then, since $v_\fp(f_1(\alpha)) > 0$ and $v_\fp(f_i(\alpha))\ge 0$ for each $i=2,\ldots, n$, there must exist
 some $j \in \{m+1, \ldots, n\}$ such that $v_\fp(f_j(\alpha)) > 0$.
 In other words, we have
$$
f_1(\alpha) \equiv f_j(\alpha) \equiv 0  \pmod \fp.
$$
This allows us to conclude that $v_\fp(\Res(f_1,f_j))>0$ (notice that, since $f_1,f_{j} \in O_{S_{\ff,\Gamma,\varepsilon}}[X]$ and $f_1$ and $f_j$ do not have common roots, we have $\Res(f_1,f_j)\in O_{S_{\ff,\Gamma,\varepsilon}}$ and $\Res(f_1,f_j) \ne 0$).
By our construction of the set $S_{\ff,\Gamma,\varepsilon}$, this implies that $\fp \in S_{\ff,\Gamma,\varepsilon}$,
which is a contradiction with the choice of $\fp$ above.
This completes the proof of Case I.

\textbf{Case II}: $\alpha \not\in  O_{S_{\ff,\Gamma,\varepsilon}}$.

In this case, there exists a prime ideal $\fp$ of  the ring of integers of $K$ such that
$$
\fp \not \in S_{\ff,\Gamma,\varepsilon} \quad\text{and}\quad v_\fp(\alpha) < 0.
$$
Let $d_i = \deg f_i, i =1, \ldots,n$.
Then,  using the ultrametric inequality of non-Archimedean valuations and noticing \eqref{eq:qinSf}, we directly have
\begin{equation}
\label{eq:ord fm}
   v_\fp(f_i(\alpha)) = d_i v_\fp(\alpha)\quad\text{for  $i=1,\ldots,n$}.
\end{equation}

Considering valuations in~\eqref{eq:multrs2} and using~\eqref{eq:ord fm} we obtain (since $v_\fp(\gamma)=0$ due to $\gamma\in O_{S_{\ff,\Gamma,\varepsilon}}^*$)
\begin{equation}
\label{eq:exponent}
k_1d_1 + k_2d_2 + \cdots + k_n d_n = 0.
\end{equation}

We view the above identity as a linear Diophantine equation with unknowns $k_1, \ldots, k_n$ in $\Z$.
Then, we have a basis of the integer solutions $(k_1, k_2, \ldots, k_n)$ of the equation \eqref{eq:exponent}, say,
$$
(t_{i,1}, t_{i,2}, \ldots, t_{i,n}), \quad i = 1, \ldots, n-1.
% \quad r = n - 1.
$$

Therefore, $k_1, k_2, \ldots, k_n$ can be expressed as
$$
k_j = \sum_{i=1}^{n-1} s_i t_{i,j},  \quad j=1,\ldots,n,
$$
for some integers $s_1, \ldots, s_{n-1}$.
Substituting this into the equation \eqref{eq:multrs2}, we obtain
\begin{equation}
\label{eq:fst}
 \left( \prod_{j=1}^{n} f_j(\alpha)^{t_{1,j}} \right)^{s_1}  \cdots \left( \prod_{j=1}^{n} f_j(\alpha)^{t_{n-1, j}} \right)^{s_{n-1}}
=   \gamma.
\end{equation}

Now, we let
$$
F(X) = \prod_{j=1}^{n} f_j(X)^{t_{1,j}}, 
$$
where the exponent vector $(t_{1,1}, \ldots, t_{1,n})$ is non-zero by its choice above. 

For any prime $\fq \not\in S_{\ff,\Gamma,\varepsilon}$, if $v_\fq(\alpha) < 0$, then~\eqref{eq:ord fm} holds and
we have
\begin{equation}
\label{eq:Falpha}
\begin{split}
v_\fq(F(\alpha)) & = \sum_{j=1}^{n} t_{1,j}v_\fq(f_j(\alpha)) \\
& =(t_{1,1}d_1 + t_{1,2}d_2 + \cdots + t_{1,n} d_n) v_\fq(\alpha) = 0,
\end{split}
\end{equation}
since $(t_{1,1}, t_{1,2}, \ldots, t_{1,n})$ is a solution to~\eqref{eq:exponent}.

For any prime $\fq \not\in S_{\ff,\Gamma,\varepsilon}$, if $v_\fq(\alpha) \ge 0$, then by \eqref{eq:qinSf}
we have  $v_\fq(f_i(\alpha)) \ge 0$ for each $i = 1, \ldots, n$.

If there exists some prime $\fq \not\in S_{\ff,\Gamma,\varepsilon}$ such that $v_\fq(\alpha) \ge 0$
and moreover $v_\fq(f_i(\alpha)) > 0, v_\fq(f_j(\alpha)) > 0$ for some $i \ne j$,
then by the same discussion as in the last part of Case I we arrive to a contradiction.

If there exists some prime $\fq \not\in S_{\ff,\Gamma,\varepsilon}$ such that $v_\fq(\alpha) \ge 0$
and moreover $v_\fq(f_i(\alpha)) > 0$ for exactly one $i$ for $i=1, \ldots, n$,
say $v_\fq(f_1(\alpha)) > 0$ and $v_\fq(f_i(\alpha)) = 0$ for each $i = 2, \ldots, n$, then by \eqref{eq:fst} we obtain
$$
s_1 t_{1,1} + \cdots + s_{n-1} t_{n-1, 1} = 0,
$$
which however contradicts with $k_1 \ne 0$ because $k_1 = s_1 t_{1,1} + \cdots + s_{n-1} t_{n-1, 1}$.

Hence, we may assume that for any prime $\fq \not\in S_{\ff,\Gamma,\varepsilon}$, if  $v_\fq(\alpha) \ge 0$,
then $v_\fq(f_i(\alpha)) = 0$ for each $i = 1, \ldots, n$.
In this case, we have $v_\fq(F(\alpha)) = 0$.
Combining this with \eqref{eq:Falpha}, we have $v_\fq(F(\alpha)) = 0$ for any prime $\fq \not\in S_{\ff,\Gamma,\varepsilon}$,
and thus  $F(\alpha) \in O_{S_{\ff,\Gamma,\varepsilon}}^*$.
Now, the desired result follows directly from Lemma~\ref{lem:KGa} (which we can apply, since $f_i$, $i=1,\ldots,n$, has at least two distinct roots and they are pairwise coprime, and therefore, $F$ has at least two distinct roots or two distinct poles). 
This completes the proof.

\subsection{Proof of Theorem~\ref{thm:multdep-rat}}

First, we assume that the rational functions $f_1, f_2, \ldots, f_n$ all have no linear factor. 

Let $g_1, \ldots, g_m$ be all the distinct monic irreducible factors (over $K$) in the numerators and denominators of the rational functions $f_{1},  f_{2}, \ldots, f_{n}$. 
So, by assumption, the irreducible polynomials $g_1, \ldots, g_m$ are all of degree at least two. 
Then, for each $f_i, 1\le i \le n$, we can write 
\begin{equation}  \label{eq:figj}
f_i = a_i \prod_{j=1}^{m} g_j^{e_{ij}},\quad a_i\in K^*,
\end{equation}
for some integers $e_{i1}, \ldots, e_{im}$. 

Let $\alpha \in K$ be such that there exist integers $k_1,\ldots, k_n$, not all zero such that
$$
f_1(\alpha)^{k_1} \cdots  f_n(\alpha)^{k_n} \in\Gamma^{\di}_\varepsilon.
$$

As in \eqref{eq:multrs2}, we can write 
\begin{equation}  \label{eq:f1fn}
f_1(\alpha)^{k_1} \cdots  f_n(\alpha)^{k_n}= \gamma,\qquad  \gamma \in O_{S_{\ff,\Gamma,\varepsilon}}^*,
\end{equation}
where the set $S_{\ff,\Gamma,\varepsilon}$ is defined as in the proof of Theorem~\ref{thm:multdep-poly2}, however without including  the prime ideals outside $S_{\ff,\Gamma}$ that divide the product  $\displaystyle\prod_{1\le i \neq j\le n}$Res$(f_i,f_j)$ of all the resultants of $f_i$ and $f_j$ for $i\ne j$, because $f_i$ and $f_j$ might  not be polynomials.

By the discussion in Section~\ref{sec:pre}, we know that $a_i \in O_{S_{\ff,\Gamma,\varepsilon}}^*$ for each $i=1,\ldots,n$. 
Hence, combining \eqref{eq:f1fn} with \eqref{eq:figj}, we get that for some $\gamma^{\prime} \in O_{S_{\ff,\Gamma,\varepsilon}}^*$, 
\begin{equation}   \label{eq:g1gm}
g_1(\alpha)^{k_1e_{11} + \cdots + k_ne_{n1}} \cdots g_m(\alpha)^{k_1e_{1m} + \cdots + k_ne_{nm}} = \gamma^{\prime}. 
\end{equation}

If for each $1\le j \le m$, $k_1e_{1j} + \cdots + k_ne_{nj} = 0$, then this means that $f_1^{k_1} \cdots f_n^{k_n}$ is a constant, 
which contradicts with the assumption that $f_1, \ldots, f_n$ are multiplicatively independent modulo constants. 

So, we must have that $k_1e_{1j} + \cdots + k_ne_{nj} \ne 0$ for some $1 \le j \le m$. 
Then, in view of \eqref{eq:g1gm} and noticing that $g_1, \ldots, g_m$ are pairwise distinct irreducible polynomials of degree at least 2, 
we obtain directly the desired finiteness result by applying Theorem~\ref{thm:multdep-poly2} to the polynomials $g_1, \ldots, g_m$.  
This completes the proof of the case when $f_1, f_2, \ldots, f_n$ all have no linear factor. 

Now, without loss of generality, we assume that for each $f_i, i =1, 2, \ldots, n$,  
 both its numerator and denominator have linear factors. 

Then, for each $f_i, i=1,2,\ldots, n$, we write 
$$
f_i = a_i f_{i1} f_{i2}, \quad a_i\in K^*,
$$
where $f_{i1} \in K(X)$ is monic and only has linear factors, and $f_{i2} \in K(X)$ is monic and only has irredicible factors of degree at least two; 
and moreover, we write 
$$
f_{i1} = \frac{h_{i1}}{h_{i2}},  \quad h_{i1}, h_{i2} \in K[X], \ \gcd(h_{i1}, h_{i2}) = 1. 
$$
By assumption, for each $i=1,2, \ldots, n$, both $h_{i1}$ and $h_{i2}$ have at least two distinct linear factors
and they only have linear factors. 
Moreover, since we have assumed that $f_1, f_2, \ldots, f_n$ have distinct linear factors, we know that $h_{11}, h_{12}, \ldots, h_{n1}, h_{n2}$ are pairwise coprime.  

Let $g_1, \ldots, g_m$ (assume $m \ge 1$) be all the distinct monic irreducible factors (over $K$) in the numerators and denominators of the rational functions $f_{12},  \ldots, f_{n2}$. 

By assumption,  the irreducible polynomials $g_1, \ldots, g_m$ are all of degree at least two. 
So, the polynomials $h_{11}, h_{12}, \ldots, h_{n1}, h_{n2}$, $g_1, \ldots, g_m$ are pairwise coprime. 

Then, for each $f_i, 1\le i \le n$, we can write 
\begin{equation}  \label{eq:fihigj}
f_i = a_i h_{i1} h_{i2}^{-1}\prod_{j=1}^{m} g_j^{e_{ij}},\quad a_i\in K^*,
\end{equation}
for some integers $e_{i1}, \ldots, e_{im}$. 

As in \eqref{eq:g1gm}, combining \eqref{eq:f1fn} with \eqref{eq:fihigj} we can get that 
for some $\gamma^{\prime} \in O_{S_{\ff,\Gamma,\varepsilon}}^*$, 
\begin{equation}   \label{eq:g1h1gm}
\prod_{i=1}^{n} h_{i1}(\alpha)^{k_i} h_{i2}(\alpha)^{-k_i} \cdot \prod_{j=1}^{m}g_j(\alpha)^{k_1e_{1j} + \cdots + k_ne_{nj}}= \gamma^{\prime}. 
\end{equation} 
Then, in view of \eqref{eq:g1h1gm} and noticing that the integers $k_1, \ldots, k_n$ are not all zero, 
we obtain directly the desired finiteness result by applying Theorem~\ref{thm:multdep-poly2} to the polynomials $h_{11}, h_{12}, \ldots, h_{n1}, h_{n2}$, $g_1, \ldots, g_m$. This completes the proof.

\subsection{Proof of Corollary~\ref{cor:multdep-ite}}
First, we assume that $f \in K[X]$ and $0$ is not a periodic point of $f$.
Since $0$ is not a periodic point of $f$, we have that  for any integer $n \ge 1$, $f^{(n)}(0) \ne 0$,
which means that $f^{(n)}$ has non-zero constant term.
So, all the iterates of $f$ are pairwise coprime. 
In addition, since $f$ has at least two distinct roots, it is easy to see that each iterate of $f$ also has at least two distinct roots. 
Hence, by Theorem~\ref{thm:multdep-poly2} we know that
there are only finitely many elements $\beta \in K$ such that
$f^{(1)}(\beta), \ldots, f^{(n)}(\beta)$ are multiplicatively dependent modulo $\Gamma^{\di}_\varepsilon$.

Now, we assume that $f$ has no linear factor. 
Then, by Lemma~\ref{lem:linear}, the iterates $f^{(1)}, \ldots, f^{(n)}$ all have no linear factor. 
Moreover, it follows directly from Lemma~\ref{lem:Young} that the iterates $f^{(1)}, \ldots, f^{(n)}$ are multiplicatively independent modulo constants. 
So, using Theorem~\ref{thm:multdep-rat} we get that there are only finitely many elements $\beta \in K$ such that
$f^{(1)}(\beta), \ldots, f^{(n)}(\beta)$ are multiplicatively dependent modulo $\Gamma^{\di}_\varepsilon$.

So,  for proving the desired result, it suffices to 
 fix such an element $\beta$ 
and show that there are only finitely many $\alpha \in K$ such that $f^{(m)}(\alpha) = \beta$ for some integer $m \ge 0$.
Indeed, this follows directly from \cite[Lemma 2.3]{BOSS} and the well-known fact that
$f$ has only finitely many preperiodic points lying in $K$.

\subsection{Proof of Theorem~\ref{thm:multdep-poly1}}
The proof follows similar ideas as in the proof of~\cite[Theorem 1.7]{BOSS}.

Let $\alpha \in K$ be such that there exist integers $k_1,k_2$, not both zero,
such that
$$
f_1(\alpha)^{k_1}f_2(\alpha)^{k_2}\in\Gamma^{\di}_\varepsilon.
$$
As in the proof of Theorem~\ref{thm:multdep-poly2}, we enlarge the set $S_{\ff,\Gamma}$ (in this case $\ff=(f_1,f_2)$) to a larger set $S_{\ff,\Gamma,\varepsilon}$ such that
\begin{equation}
\label{eq:multrs}
f_1(\alpha)^{k_1}f_2(\alpha)^{k_2}= \gamma \in O_{S_{\ff,\Gamma,\varepsilon}}^*.
\end{equation}

Also, as in the proof of Theorem~\ref{thm:multdep-poly2} we can assume that $k_1 k_2 \ne 0$.
From~\eqref{eq:multrs} and the power saturation of~$O_{S_{\ff,\Gamma,\varepsilon}}^*$ in~$K^*$, we see that
$$
  \gamma = \beta^{\gcd(k_1, k_2)} \quad \textrm{for some $\beta \in O_{S_{\ff,\Gamma,\varepsilon}}^*$}.
$$
This allows us to take the~$\gcd(k_1, k_2)$-root of~\eqref{eq:multrs}, so
without loss of generality we can assume that
$$
  \gcd(k_1, k_2)=1.
$$

We now complete the proof case by case.

\textbf{Case I}: $\alpha \in  O_{S_{\ff,\Gamma,\varepsilon}}$.

In this case, we have $f_1(\alpha),f_2(\alpha)\in O_{S_{\ff,\Gamma,\varepsilon}}$. We note that if $f_1(\alpha)\in O_{S_{\ff,\Gamma,\varepsilon}}^*$ or $f_2(\alpha)\in O_{S_{\ff,\Gamma,\varepsilon}}^*$,
then Lemma~\ref{lem:KGa} implies the finiteness of such $\alpha \in K$ satisfying~\eqref{eq:multrs}.

Thus, we can assume that  $f_1(\alpha),f_2(\alpha)\not\in O_{S_{\ff,\Gamma,\varepsilon}}^*$.
This implies that there exists a prime ideal $\fp$ of $K$ such that the additive valuation $v_\fp(f_1(\alpha))>0$.
Moreover, since $f_2(\alpha)\in O_{S_{\ff,\Gamma,\varepsilon}}$, we have $v_\fp(f_2(\alpha))\ge 0$.

If $k_1k_2 > 0$, then we can assume that $k_1, k_2 > 0$.
In this case, since Equation~\eqref{eq:multrs} implies
$$
k_1v_\fp(f_1(\alpha))+k_2v_\fp(f_2(\alpha))=0,
$$
 we obtain a contradiction by noticing $v_\fp(f_1(\alpha)) > 0$ and $v_\fp(f_2(\alpha)) \ge 0$.

We now assume $k_1k_2<0$. Moreover, we can assume $k_1>0$ and $k_2<0$ (similar discussion applies for $k_1<0$ and $k_2>0$).
Since $\gcd(k_1,k_2)=1$, there exist integers $s,t$ such that
$$
sk_1 + tk_2=1.
$$
Then, using~\eqref{eq:multrs}, we have
\begin{equation}
\label{eq:multab}
\begin{split}
& f_1(\alpha) = f_1(\alpha)^{sk_1 + tk_2} = \gamma^s (f_1(\alpha)^{-t}f_2(\alpha)^s)^{-k_2},  \\
& f_2(\alpha) = f_2(\alpha)^{sk_1 + tk_2} = \gamma^t (f_1(\alpha)^{-t}f_2(\alpha)^s)^{k_1}.
\end{split}
\end{equation}
We note that, since $f_1(\alpha) \in O_{S_{\ff,\Gamma,\varepsilon}}, \gamma \in O_{S_{\ff,\Gamma,\varepsilon}}^*$ and $-k_2>0$, we have  $f_1(\alpha)^{-t}f_2(\alpha)^s \in O_{S_{\ff,\Gamma,\varepsilon}}$.

 If $f_1(\alpha)^{-t}f_2(\alpha)^s \in O_{S_{\ff,\Gamma,\varepsilon}}^*$, then by~\eqref{eq:multab} we obtain that $f_1(\alpha)\in O_{S_{\ff,\Gamma,\varepsilon}}^*$,
 which contradicts our assumption above. Thus, $f_1(\alpha)^{-t}f_2(\alpha)^s \not\in O_{S_{\ff,\Gamma,\varepsilon}}^*$.
 Then, by Lemma~\ref{lem:GenS-T} (with $y=f_1(\alpha)^{-t}f_2(\alpha)^s$ and noticing $\gamma \in O_{S_{\ff,\Gamma,\varepsilon}}^*$),
 the exponent $-k_2$ is bounded above only in terms of $f_1, f_2, K, \Gamma$ and $\varepsilon$. 
Similarly, we obtain that  the exponent $k_1$ is also bounded above only
 in terms of $f_1, f_2, K, \Gamma$ and $\varepsilon$. 
 Hence, in \eqref{eq:multrs} there are only finitely many choices of the two exponents $k_1, k_2$.
Then, fixing $k_1, k_2$ and  applying Lemma~\ref{lem:KGa} to the rational function $f_1^{k_1}f_2^{k_2}$,
 we obtain the desired finiteness result, where we need to use the assumption on   $f_1$ and $f_2$ that they can not multiplicatively generate a power of a linear fractional function.
This completes the proof of Case I.

\textbf{Case II}: $\alpha \not\in  O_{S_{\ff,\Gamma,\varepsilon}}$.

In this case, as in the proof of Theorem~\ref{thm:multdep-poly2}, we can choose a prime ideal $\fp$ of  the ring of integers of $K$ such that
$$
\fp\not \in S_{\ff,\Gamma, \varepsilon} \quad\text{and}\quad v_\fp(\alpha) < 0.
$$
Let $d_i = \deg f_i, i =1, 2$.
Then,  using the ultrametric inequality of non-Archimedean valuations and noticing \eqref{eq:qinSf}, we directly have
\begin{equation}
\label{eq:ord fm 2}
   v_\fp(f_i(\alpha)) = d_i v_\fp(\alpha)\quad\text{for  $i=1,2$.}
\end{equation}
Considering valuations in~\eqref{eq:multrs} and using~\eqref{eq:ord fm 2} we obtain (since $v_\fp(\gamma)=0$ due to $\gamma\in O_{S_{\ff,\Gamma,\varepsilon}}^*$)
$$
k_1d_1+k_2d_2=0.
$$
Since $\gcd(k_1,k_2)=1$, this implies that $k_1 \mid d_2$ and $k_2 \mid d_1$. Thus we can assume that both $k_1$ and $k_2$ are fixed.
Then, as in Case I, the desired finiteness result follows from Lemma~\ref{lem:KGa}.
This completes the proof.

\subsection{Proof of Theorem~\ref{thm:multdep-rat2}}
The proof follows directly from Maurin's result (that is, Lemma~\ref{lem:Maurin}).
Indeed, let $r$ be the rank of $\Gamma$ modulo torsion and let $g_1,\ldots,g_r \in \Gamma$ be its generators,
and thus, they are multiplicatively independent elements. We define the parametric curve
$$
\cC=\{(\alpha, f_1(\alpha),\ldots,f_n(\alpha), g_1,\ldots,g_r) : \, \alpha \in \ov{K}\}\subset \Gm^{n+r+1}.
$$
We choose $\varepsilon$ to be half of the size of the real $\varepsilon$ from Lemma~\ref{lem:Maurin}.

For an element  $\alpha \in \Gamma^\di_\varepsilon$, assume  that  $f_1(\alpha), \ldots, f_n(\alpha)$ are multiplicatively dependent modulo $\Gamma^\di_\varepsilon$.
Since $\alpha \in \Gamma^\di_\varepsilon$, there exist a non-zero vector $(k_0,\ldots,k_r)\in\Z^{r+1}$, $k_0 \ne 0$, such that
$$
\alpha^{k_0}g_1^{k_{1}}\cdots g_r^{k_{r}}=\beta^{k_0}
$$
for some $\beta \in \overline{K}^*$ with $ \h(\beta) \leq \varepsilon$, implying that
\begin{equation}\label{rel1}
\dfrac{\alpha^{k_0}}{\beta^{k_0}}g_1^{k_{1}}\cdots g_r^{k_{r}}=1.
\end{equation}
Moreover, since $f_1(\alpha), \ldots, f_n(\alpha)$ are multiplicatively dependent modulo $\Gamma^\di_\varepsilon$, there exist some positive integer $t$ and a non-zero vector $(\ell_1,\ldots,\ell_{n+r})\in\Z^{n+r}$ such that
$$
f_1(\alpha)^{t\ell_1}\cdots f_n(\alpha)^{t\ell_n}g_1^{\ell_{n+1}}\cdots g_r^{\ell_{n+r}}=\gamma^t
$$
for some $\gamma \in \overline{K}^*$ with $\h(\gamma) \leq \varepsilon$,
implying that (without loss of generality, we assume $\ell_1 \cdots \ell_n \ne 0$)
 \begin{equation}\label{rel2}
 \dfrac{f_1(\alpha)^{t\ell_1}}{\gamma^{t\ell_1/n\ell_1}}\cdots \dfrac{f_n(\alpha)^{t\ell_n}}{\gamma^{t\ell_n/n\ell_n}}g_1^{\ell_{n+1}}\cdots g_r^{\ell_{n+r}}=1.
 \end{equation}
 Therefore, the point
$$
\left(\dfrac{\alpha}{\beta},\frac{f_1(\alpha)}{\gamma^{1/n\ell_1}},\ldots,\frac{f_n(\alpha)}{\gamma^{1/n\ell_n}},g_1,\ldots,g_r\right)
$$
satisfies the multiplicative dependence relations \eqref{rel1} and \eqref{rel2}, which have linearly independent vectors of exponents.
Moreover,
\begin{align*}
&\left(\alpha,f_1(\alpha),\ldots,f_n(\alpha),g_1,\ldots,g_r\right)\\&=\left(\dfrac{\alpha}{\beta},\frac{f_1(\alpha)}{\gamma^{1/n\ell_1}},\ldots,\frac{f_n(\alpha)}{\gamma^{1/n\ell_n}},g_1,\ldots,g_r\right) \cdot (\beta,\gamma^{1/n\ell_1},\ldots ,\gamma^{1/n\ell_n},1,\ldots,1)
\end{align*} 
is a point on $\mathcal{C}$ with
\begin{align*}
\h(\beta,\gamma^{1/n\ell_1},\ldots,\gamma^{1/n\ell_n},1,\ldots,1)
& : =  \h(\beta)+ \h(\gamma^{1/n\ell_1}) + \cdots + \h(\gamma^{1/n\ell_n}) \\
& \leq \h(\beta) + \h(\gamma) \leq 2\varepsilon.
\end{align*}
We also note that by assumption, the functions $X, f_1,\ldots,f_n,g_1,\ldots,g_r$ are multiplicatively independent.
Hence, the desired result follows
directly from Lemma~\ref{lem:Maurin}.

\section*{Acknowledgement}

The authors are grateful to the referee for a careful reading of the manuscript and valuable comments.

%During the preparation of this work, 
The research of Attila B\'erczes was supported in part by grants K128088 and ANN130909 of the Hungarian National Foundation for Scientific Research and by the project EFOP-3.6.1-16-2016-00022 co-financed by the European Union and the European Social Fund. 
K\'alm\'an Gy\H ory was supported in part by grants
K128088 and ANN130909 of the Hungarian National Foundation for Scientific Research.
Jorge Mello and Alina Ostafe were partially supported by the
Australian Research Council Grants DP180100201 and DP200100355. 
Min Sha was supported by the Guangdong Basic and Applied Basic Research Foundation (No. 2022A1515012032) 
and also by the Australian Research Council Grant DE190100888. 
Alina Ostafe also gratefully acknowledges the generosity and hospitality of the Max Planck Institute for Mathematics where parts of her work on this project were developed.

\end{document}